\pgfplotsset{compat=1.6}
\theoremstyle{plain}%
\newtheorem{theorem}{Theorem}[section]
\newtheorem{lemma}[theorem]{Lemma}
\newtheorem{proposition}[theorem]{Proposition}
\newtheorem*{conjecture*}{Conjecture}
\theoremstyle{definition}
\newtheorem{definition}[theorem]{Definition}
\newtheorem{example}[theorem]{Example}
\theoremstyle{remark}
\newtheorem{remark}[theorem]{Remark}
\let \le \leqslant
 \let \leq \leqslant
 \let \geq \geqslant
 \let \ge \geqslant
\title{A formula for hidden regular variation behavior for symmetric
stable distributions 
}
\author{Malin P. Forsstr\"om}
\address[Malin P. Forsstr\"om]{KTH Royal Institute of Technology, Stockholm, Sweden.}
\email{malinpf@kth.se}
\author{Jeffrey E. Steif}
\address[Jeffrey E. Steif]{Chalmers University of Technology and Gothenburg University, Gothenburg, 
Sweden}
\email{steif@chalmers.se}
\newcommand{\bftheta}{\mathbf{\theta}}
\newcommand{\E}{\mathbb{E}}
\DeclareMathOperator{\unif}{unif}
\DeclareMathOperator{\support}{supp}
\keywords{Hidden regular variation, multivariate stable distributions.}
\subjclass[2010]{60E07, 60G70.}
\begin{document}

\maketitle

\begin{abstract}
We develop a formula for the power-law decay of various sets for symmetric stable random vectors
in terms of how many vectors from the support of the corresponding spectral measure are needed to enter the set.
One sees different decay rates in ``different directions'', illustrating the phenomenon of 
\emph{hidden regular variation}. We give several examples and obtain quite varied behavior, including sets 
which do not have exact power-law decay.

\end{abstract}

\section{Main result and remarks}\label{section: main} 
  
Many distributions have tails that exhibit regular variation 
(see \cite{BGT} and \cite{MW2019}) which means that they
behave like a power-law times a slowly varying function. Examples are
one-dimensional stable distributions where the slowly varying function is just constant.
For stable random vectors, one also has this but in addition,
one can have more interesting behavior, so-called
{\em hidden regular variation} (see \cite{dmr2013}, \cite{sr2002}, \cite{sr2007}), 
meaning that one has different power-law decay in different directions. Ideally, one would like to
capture the correct decay rate in each such direction.
Our main result, Theorem~\ref{theorem: later tails}, describes such behavior for symmetric
stable distributions. Needed definitions and background will be given in 
Section~\ref{section: background}.

Let \( \alpha \in (0,2) \) and \( X \) be an \(n \)-dimensional 
symmetric \( \alpha \)-stable random vector with spectral measure \( \Lambda \) (see~\eqref{eq: multivariate stable cf}). Then \( \Lambda \) is a bounded measure on the unit sphere \( \mathbb{S}^{n-1} \) in \( \mathbb{R}^n \).
Let \( E \subseteq \mathbb{R}^n \) be a Borel set with \( \mathbf{0} \not \in \bar E \),
where \( \bar E \) and \( E^o \) denote the closure and interior of $E$ respectively.
With $d$ being the Euclidean distance, define the \( \delta \)-neighborhood of \(E \) by
$$ 
E_{\delta,+} \coloneqq \{ \mathbf{x} \in \mathbb{R}^n \colon d(\mathbf{x},E) < \delta \}.
$$
For any integer $k\ge 1$ and any $E$ as above, letting $C_\alpha$ be a constant
defined in Section~\ref{section: background} and \( \Lambda^k \coloneqq \Lambda \times \cdots \times\Lambda \) (\( k \) times), 
 define 
 \begin{equation}\label{eq: the limiting expression 0}
\begin{split}
L(E,k,\alpha)\coloneqq  \frac{C_\alpha^k  }{k!}
\int_0^\infty
 \cdots
\int_{0}^\infty
& \Lambda^k \Biggl( \biggl\{ \mathbf{x}_1, \ldots, \mathbf{x}_k \in \mathbb{S}^{n-1} \colon   \sum_{i=1}^k s_i\mathbf{x}_i \in E \biggr\} \Biggr) 
\cdot    \prod_{i=1}^k \alpha  s_i^{-(1+\alpha)}\, ds_i.
\end{split}
\end{equation}

\begin{theorem} \label{theorem: later tails}
For any \( \alpha \), \( X \), $E$ and $k$ as above,
\begin{equation}\label{eq:theorem1}
\begin{split}
&L(E^o,k,\alpha)\le \liminf_{h\to \infty} h^{k \alpha} P(X \in h E )
\\&\qquad \le \limsup_{h\to \infty}  h^{k \alpha} P(X \in h E ) \le \lim_{\delta \to 0} L(E_{\delta,+},k,\alpha).
\end{split}
\end{equation}

\end{theorem}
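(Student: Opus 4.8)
\emph{Representation and reduction.} The plan is to start from the LePage/Poisson representation of $X$ recalled in Section~\ref{section: background}: writing points in polar form $x=\rho w$ with $\rho>0$, $w\in\mathbb{S}^{n-1}$, the vector $X$ is the (symmetric) sum of a Poisson point process $\Pi$ on $\mathbb{R}^n\setminus\{\mathbf{0}\}$ with intensity $\mu(d\rho\,dw)=C_\alpha\,\alpha\rho^{-(1+\alpha)}\,d\rho\,\Lambda(dw)$, where $C_\alpha$ is exactly the constant making the resulting characteristic function equal $\exp(-\int_{\mathbb{S}^{n-1}}|\langle t,w\rangle|^\alpha\,\Lambda(dw))$. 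Since $\mu$ is $(-\alpha)$-homogeneous, the map $x\mapsto x/h$ pushes $\Pi$ forward to a Poisson process with intensity $\mu_h:=h^{-\alpha}\mu$, so that $P(X\in hE)=P(S_h\in E)$ with $S_h$ the symmetric sum of that process. Fixing a small threshold $\eta>0$, I split $S_h=L_h^{(\eta)}+R_h^{(\eta)}$ into the sum $L_h^{(\eta)}$ of the (finitely many) points of norm $\ge\eta$ and the sum $R_h^{(\eta)}$ of the points of norm $<\eta$; these are independent, the count $N_\eta$ of large points is Poisson with mean $\lambda_\eta=h^{-\alpha}C_\alpha\|\Lambda\|\eta^{-\alpha}$, and $R_h^{(\eta)}\to\mathbf{0}$ in probability as $h\to\infty$.

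\emph{Lower bound.} As $\mathbf{0}\notin\bar E$ we work with $E^o$. For $\beta>0$ let $E^{-\beta}:=\{x:\,B(x,\beta)\subseteq E^o\}$, so that $x\in E^{-\beta}$ and $|R_h^{(\eta)}|<\beta$ together force $S_h\in E^o$. Conditioning on $N_\eta=k$ makes the $k$ large points i.i.d.\ with law $\mu_h|_{\{|x|\ge\eta\}}/\lambda_\eta$ and independent of $R_h^{(\eta)}$, whence $P(S_h\in E^o)\ge P(N_\eta=k)\,P(T_k\in E^{-\beta})\,P(|R_h^{(\eta)}|<\beta)$ with $T_k$ the sum of those $k$ points. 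Using $h^{k\alpha}\mu_h^{\otimes k}=\mu^{\otimes k}$, a direct computation gives $h^{k\alpha}P(N_\eta=k)P(T_k\in E^{-\beta})=e^{-\lambda_\eta}\frac{C_\alpha^k}{k!}\int_{[\eta,\infty)^k}\Lambda^k(\{w:\sum_i s_iw_i\in E^{-\beta}\})\prod_i\alpha s_i^{-(1+\alpha)}\,ds_i$. Letting $h\to\infty$ (so $\lambda_\eta\to0$ and $P(|R_h^{(\eta)}|<\beta)\to1$) and then $\eta,\beta\downarrow0$, monotone convergence upgrades the domain $[\eta,\infty)^k$ to $(0,\infty)^k$ and $E^{-\beta}$ to $E^o$, yielding $\liminf_h h^{k\alpha}P(S_h\in E^o)\ge L(E^o,k,\alpha)$.

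\emph{Upper bound.} Fix $\delta>0$ and choose $\eta<\delta/(k+1)$. On $\{|R_h^{(\eta)}|<\delta\}$ the inclusion $S_h\in E$ forces $L_h^{(\eta)}\in E_{\delta,+}$, so $P(S_h\in E)\le P(|R_h^{(\eta)}|\ge\delta)+P(L_h^{(\eta)}\in E_{\delta,+})$, and I will show $P(|R_h^{(\eta)}|\ge\delta)=o(h^{-k\alpha})$ below. Expanding the second term over $N_\eta=m$ gives $h^{k\alpha}P(L_h^{(\eta)}\in E_{\delta,+})=e^{-\lambda_\eta}\sum_{m\ge0}\frac{h^{(k-m)\alpha}}{m!}\int_{[\eta,\infty)^m}\mathbf{1}[\sum_i x_i\in E_{\delta,+}]\prod_i\mu(dx_i)$. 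Bounding the indicator by $1$, the terms $m>k$ sum to at most $h^{k\alpha}\sum_{m>k}\lambda_\eta^m/m!=O(h^{-\alpha})\to0$. For $m<k$ the factor $h^{(k-m)\alpha}\to\infty$, so such a term stays bounded only if its integral vanishes; if some $m<k$ integral is positive, then $m$ jumps already reach the open set $E_{\delta,+}$, and padding with $k-m$ arbitrarily small jumps shows $L(E_{\delta,+},k,\alpha)=\infty$, making the claim vacuous. In the remaining case only $m=k$ survives, bounded by $\frac{C_\alpha^k}{k!}\int_{(0,\infty)^k}\Lambda^k(\{w:\sum_i s_iw_i\in E_{\delta,+}\})\prod_i\alpha s_i^{-(1+\alpha)}\,ds_i=L(E_{\delta,+},k,\alpha)$. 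Thus $\limsup_h h^{k\alpha}P(S_h\in E)\le L(E_{\delta,+},k,\alpha)$, and letting $\delta\downarrow0$ finishes.

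\emph{Main obstacle.} The crux is the estimate $P(|R_h^{(\eta)}|\ge\delta)=o(h^{-k\alpha})$, which forces $\eta$ small relative to $\delta$: crossing a macroscopic distance using only jumps of size $<\eta$ must be far rarer than $h^{-k\alpha}$. I would prove it by a symmetric Chernoff bound. Since $R_h^{(\eta)}$ is symmetric and infinitely divisible with Lévy measure $\mu_h|_{\{|x|<\eta\}}$, for a unit vector $u$ we have $\log\E e^{\theta\langle u,R_h^{(\eta)}\rangle}=\int_{|x|<\eta}(\cosh(\theta\langle u,x\rangle)-1)\,\mu_h(dx)\le\tfrac12\theta^2e^{\theta\eta}\int_{|x|<\eta}|x|^2\,\mu_h(dx)$, and the last integral equals $C_\alpha\|\Lambda\|\frac{\alpha}{2-\alpha}\,h^{-\alpha}\eta^{2-\alpha}$ since $\alpha<2$. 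Choosing $\theta=A\log h$ with $A$ large enough that $A\delta/\sqrt{n}\ge(k+1)\alpha$ and $\eta$ small enough that $A\eta<\alpha$, a union bound over finitely many directions gives $P(|R_h^{(\eta)}|\ge\delta)\le h^{-(k+1)\alpha+o(1)}$, as needed; using $\cosh$ (hence the symmetry of $\Lambda$) is essential to handle $\alpha\ge1$, where $\int_{|x|<\eta}|x|\,\mu_h(dx)$ diverges. The only other points needing care are the measurability of the sets in \eqref{eq: the limiting expression 0} and the monotone/bounded limit passages, both routine because $E^o$ and $E_{\delta,+}$ are open and $\mathbf{0}\notin\bar E$.
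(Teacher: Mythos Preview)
Your proof is correct and rests on the same LePage/Poisson representation as the paper, but the two arguments diverge in how the sum is split and, more importantly, in how the small part is controlled. The paper keeps exactly the $k$ largest jumps, i.e.\ it splits $\sum_i \Gamma_i^{-1/\alpha} \mathbf{W}_i$ at $i=k$, and then proves the rather technical Lemma~\ref{lemma: moments of tails} to show that the tail $\sum_{i\ge k+1}\Gamma_i^{-1/\alpha}\mathbf{W}_i$ has a finite moment of order $k\alpha+\varepsilon$; this is needed precisely because that tail still contains unbounded jumps and therefore has no exponential moments. You instead truncate at a fixed level $\eta$, so your remainder $R_h^{(\eta)}$ is a symmetric infinitely divisible vector with \emph{bounded} jumps, for which exponential moments exist and a short Chernoff bound (via $\cosh$, using the symmetry of $\Lambda$) gives $P(|R_h^{(\eta)}|\ge\delta)=o(h^{-N})$ for every $N$. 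The price you pay is that your large part carries a Poisson number $N_\eta$ of terms, forcing a sum over $N_\eta=m$: the terms with $m>k$ are trivially $O(h^{-\alpha})$, and for $m<k$ your padding argument (adding $k-m$ arbitrarily small extra jumps that stay inside the open set $E_{\delta,+}$, so that $L(E_{\delta,+},k,\alpha)=\infty$) is correct---one clean way to obtain the required uniformity is to first pass from $E_{\delta,+}$ to $E_{\delta',+}$ for some $\delta'<\delta$, so that every relevant point has a ball of radius $\delta-\delta'$ inside $E_{\delta,+}$. In summary, your Chernoff estimate is shorter and sharper than Lemma~\ref{lemma: moments of tails}, while the paper's fixed-$k$ split avoids the case analysis over $m$; both routes arrive at the same limiting integral after the same change of variables.
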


\begin{remark}\label{remark: 1}
\mbox{}
\begin{enumerate}[(i)] 

\item 
Clearly \(L(E^o,k,\alpha)\le L(\bar E,k,\alpha)\le  \lim_{\delta \to 0} L(E_{\delta,+},k,\alpha)\). Also, by the Lebesgue dominated convergence theorem, 
\(L(\bar E,k,\alpha)= \lim_{\delta \to 0}L(E_{\delta,+},k,\alpha)\) provided that
\(L(E_{\delta,+},k,\alpha)\) is finite for some $\delta>0$.

\item  Since $0\not\in \bar E$, for small \( \delta>0 \) and $s_1>0$ we have that $s_1\mathbb{S}^{n-1}\cap E_{\delta, +}=\emptyset$. This implies in particular that  the integrand in the definition of \( L(E_{\delta,+},1,\delta)\) is equal to zero for small \( \delta \) and  $s_1$, removing the singularity at $s_1=0$, and hence $\lim_{\delta \to 0} L(E_{\delta,+},1,\alpha)$ is always finite.
\item We let \( \support (\Lambda) \) be the (topological) support of \( \Lambda\) and, for \( k \in \{  1,2, \ldots, n \} \), let
\begin{equation*}
\begin{split}
S_\Lambda(k) \coloneqq \Bigl\{ \mathbf{x} \in \mathbb{R}^n \colon \mathbf{x} = \sum_{i = 1}^k s_i \mathbf{x}_i &\text{ for some } s_1,\ldots, s_k \in \mathbb{R} \text{ and } \\[-3ex]&\qquad\qquad \mathbf{x}_1, \ldots, \mathbf{x}_k \in \support(\Lambda) \Bigr\}.
\end{split}
\end{equation*} 
Then \( \lim_{\delta \to 0} L(E_{\delta,+},k,\alpha) = 0\) whenever \( E_{\varepsilon,+} \cap S_\Lambda(k) = \emptyset\) for sufficiently small \( \varepsilon > 0 \). 
On the other hand, if \( E^o \cap S_{\Lambda}(k) \not = 0 \), then \( L(E^o,k,\alpha) > 0 \). 
Finally, assume that \( E \subseteq \overline{E^o}\), \( E^o \cap S_{\Lambda}(k) \not = 0 \) and there is \( \varepsilon>0 \) such that  whenever \( \sum_{i=1}^k s_i \mathbf{x}_i \in E^o \) for some \( \mathbf{x}_1,\ldots, \mathbf{x}_k \in \support \Lambda \), then  \( |s_1|, \ldots, |s_k| > \varepsilon \). This implies that if \( \delta \in ( 0,\varepsilon )\), then \( L(E_{\delta,+}, k, \alpha) < \infty\), 
	and hence by (i), \( L(\bar E, k, \alpha) = \lim_{\delta \to 0} L(E_{\delta,+}, k, \alpha)\). Furthermore, if in addition we assume that the boundary of \( E \) has zero Lebesgue measure, then \( L(E^o,k,\alpha) = L(\bar E, k,\alpha) \) and hence by Theorem~\ref{theorem: later tails}, \( \lim_{h \to \infty} h^{k \alpha} P(X \in hE) = L(E,k, \alpha) \in (0,\infty).\) This illustrates what would typically hold in most generic or ''nice'' situations.
\end{enumerate}
\end{remark}

\begin{remark}
	Taking \( E \) to be the set \( \{ \mathbf{x} \in \mathbb{R}^n \colon \min x_i > 1 \} \) and 
\( k = 1 \), one obtains Theorem~4.4.1 in~\cite{st1994} in the symmetric 
case (see equation (4.4.2) in~\cite{st1994}). 
\end{remark}

\begin{remark}
	If we for \( \mathbf{x} \in \mathbb{R}^n \) let \( \| \mathbf{x} \|_2 \coloneqq (x_1^2 + \ldots + x_n^2)^{1/2} \) and  define
\[
 E \coloneqq \rm{Cone}(A) \coloneqq \bigl\{ \mathbf{x} \in \mathbb{R}^n \colon \| \mathbf{x} \|_2   > 1 \text{ and } \mathbf{x}/ \|\mathbf{x}\|_2 \in A \bigr\}
 \]
 for some \( A \subseteq \mathbb{S}^{n-1} \) with \(\Lambda(\partial A) =0,\)  
and then apply Theorem~\ref{theorem: later tails} 
to both $E$ and to the complement of the unit ball
with \( k = 1 \), we recover Corollary 6.20 in~\cite{ag1980} in the symmetric case, stating that
\begin{equation}\label{eq: cone result}
\lim_{h \to \infty} \frac{P\bigl(X \in \rm{Cone}(A),\, \| X \|_2 > h\bigr)}{P\bigl(\| X \|_2 > h\bigr)} = \frac{\Lambda(A)}{\Lambda\bigl(\mathbb{S}^{n-1}\bigr)}.
\end{equation}

\end{remark}

\begin{remark}
	Our motivation for looking at Theorem~\ref{theorem: later tails} and its consequences (see 
Section~\ref{section: examples}) was to understand which
threshold stable vectors can be obtained as divide and color processes in the sense of 
\cite{st2017}. These applications, as well as a study of which 
threshold Gaussian vectors can be obtained as divide and color processes, is carried out in 
\cite{PS.threshold}. 
\end{remark}

\begin{remark} 
 
One might  guess that Theorem~\ref{theorem: later tails} would generalize to so-called regularly varying random vectors (see e.g.\ Proposition 2.2.20 on p.\ 57 in~\cite{MW2019}). This 
is however not the case. To see this, let \( \alpha \in (0,2) \) and let \( X_1 \) be an  
\( \alpha \)-stable random vector in \( \mathbb{R}^2 \) whose spectral measure \( \Lambda_1 \) has 
mass \( 1/4 \) at \( \pm (1,0) \) and \( \pm (0,1) \). Further, let \( \alpha' \in (\alpha, 2\alpha) \cap (0,2)\) 
and let \( X_2 \) be an \( \alpha' \)-stable random vector in \( \mathbb{R}^2 \) with uniform 
spectral measure independent of \( X_1 \).
Then  \( X_1 \) and  \( X_1+X_2 \) are both regularly varying with the same index and same limiting measure. However, if we let 
\( E \coloneqq \rm{Cone}(\pi/8, 3\pi/8)\),
then using Theorem~\ref{theorem: later tails} one easily obtains \( P(X_1 \in hE) \asymp h^{-2\alpha} \)
while, letting \( E'\coloneqq \rm{Cone}((\pi/8)+.001, (3\pi/8)-.001)\), we have for $h$ large
\[
2P\bigl(X_1 + X_2 \in hE\bigr) \geq  P\bigl(X_2 \in hE'\bigr) \asymp  h^{-\alpha'} \gg h^{-2\alpha}.
\]
(As usual, \( \asymp \) denotes two quantities whose ratio is bounded away from zero and infinity.)
\end{remark}

\section{Background}\label{section: background} 

\subsection{Stable random vectors}

In this section we now give some relevant definitions. These will be very brief as we assume the reader is 
familiar with the basics of stable vectors. For a more thorough introduction to stable random vectors, we refer the reader to~\cite{st1994}. 

\begin{definition}
A  random vector $X \coloneqq (X_i)_{1\le i\le n}$ in $\mathbb{R}^n$ has a symmetric 
\emph{stable} distribution if $X$ is symmetric (invariant under $\mathbf{x}\mapsto -\mathbf{x}$) and
if for all $k\geq 1$, there exists $a_k>0$ so that if 
$X^1$, $\ldots , $ $X^k$ are $k$ i.i.d.\ copies of $X$, then
$$
\sum_{1\le i\le k} X^i \overset{\mathcal D}{=} a_k X.
$$
\end{definition}
\noindent It is well known that for any symmetric stable vector $X$ there exists $\alpha\in (0,2]$, called the stability index,  so that 
for all $k\ge 1$, $a_k=k^{1/\alpha}$. The stability index $\alpha =2$ corresponds to  
Gaussian random vectors. If \( n = 1 \), then  besides $\alpha$, there is only one 
parameter, the scale parameter $\sigma$, and in this case
the characteristic function $\phi_X (\theta)$ is given by
$$
\phi_X(\theta)=e^{-\sigma^\alpha |\theta|^\alpha},\qquad  \theta \in \mathbb{R}.
$$
(When $\alpha=2$, $\sigma$ corresponds to the standard deviation divided by $\sqrt{2}$, 
an irrelevant scaling.) When $\sigma=1$, we denote this distribution by $S_{\alpha}$.
For stable vectors, the picture is somewhat more complicated.
A random vector \( X \) in $\mathbb{R}^n$ has a symmetric stable distribution with stability 
exponent $\alpha$ if and only if its characteristic function $\phi_X(\bftheta)$ has the form 
\begin{equation}\label{eq: multivariate stable cf}
\phi_X(\bftheta)=\exp \Bigl( -\int_{\mathbb{S}^{n-1}} |\bftheta\cdot \mathbf{x}|^\alpha \, d\Lambda(\mathbf{x}) \Bigr), \qquad  \theta \in \mathbb{R}^n
\end{equation}
for some finite measure $\Lambda$ on the unit sphere $\mathbb{S}^{n-1}$ 
which is invariant under
$\mathbf{x}\mapsto -\mathbf{x}$. $\Lambda$ is called the {\it spectral measure} 
of \( X \). If~\eqref{eq: multivariate stable cf} holds for some \( \alpha \) and \( \Lambda \), we write \( X \sim S_\alpha(\Lambda) \).  For $\alpha\in (0,2)$ fixed, different $\Lambda$'s yield different distributions. 
This is not true for $\alpha=2$.

When \( S_1 \), \( S_2 \), \ldots , \( S_m \) are i.i.d.\ random variables with distribution $S_\alpha$, 
$S \coloneqq (S_1,\dots,S_m)$,  and $A$ is an $n\times m$ matrix, then the vector $X \coloneqq (X_1,\ldots,X_n)$ defined by
$$
X \coloneqq  A S
$$
is a symmetric \( \alpha \)-stable random vector. To describe the spectral measure of \( X \), consider the columns 
of $A$ as elements of \(\mathbb{R}^n \), denoted by ${\hat {\mathbf{y}}}_1,\ldots,{\hat {\mathbf{y}}}_m$.
Then $\Lambda$ is obtained by placing, for each $i\in [m] \coloneqq \{ 1,2, \ldots, m\}$, a mass of weight
$ {\| {\hat {\mathbf{y}}}_i\|_2^\alpha}/{2}$ at $\pm {\hat {\mathbf{y}}}_i/\| {\hat {\mathbf{y}}}_i\|_2$. See p.\ 69 in \cite{st1994}.

Finally, we need the following facts. If \( X \sim S_\alpha \), then 
\begin{equation}\label{eq: basictail}
P (X \geq h ) \sim  \frac{C_\alpha  \, h^{-\alpha}}{2}  \qquad \text{as } h\to\infty
\end{equation}
where there is an exact formula for $C_\alpha$; see e.g.\ page 17 in \cite{st1994}.
The exact formula for this constant will not be relevant to us and so we will 
express quantities in terms of $C_\alpha$. Moreover, if we let \( f \) 
denote the probability density function of $X$, then
\begin{equation}\label{eq: densitytail}
f(h) \sim  \frac{C_\alpha\, \alpha h^{-(1+\alpha)}}{2} \qquad \text{as } h\to\infty;
\end{equation}
see~\cite{n1999}. Also, \( f (x) \) is decreasing in \( x \)  for \( x > 0 \);
see Theorem 2.7.4 on page 128 in \cite{z1986}.

\subsection{Related work}

When the spectral measure \( \Lambda \) of \( X \) is finitely supported, some asymptotic behavior of the corresponding probability 
density function \( f(x) \) in different directions is obtained in~\cite{h2003}. 
However, since the convergence in this case 
is not known to be uniform, this result cannot be used to get a version of Theorem~\ref{theorem: later tails}  for finitely supported \( \Lambda \). 
We also mention that results in~\cite{w2007} can be used to find the correct normalizing function above for many sets, but that these results cannot be used to find an expression for the limit as given by Theorem~\ref{theorem: later tails}, as only upper and lower bounds are given.
In both~\cite{h2003}~and~\cite{w2007}, the proofs are analytical, while our proofs are more probabilistic.

\section{Proof of Theorem~\ref{theorem: later tails}} \label{section: proof} 
 
The proof of Theorem~\ref{theorem: later tails} is somewhat simpler in the case when the spectral measure
is finitely supported in addition to being symmetric. We therefore first give a proof in this simpler setting, which is also sufficient for the examples covered in Section~\ref{section: examples}.

\begin{proof}[Proof of Theorem~\ref{theorem: later tails} for symmetric and finitely supported spectral measures] 
Suppose that \( \Lambda \) is symmetric and has support in 
\( \pm \mathbf{y}_1, \ldots, \pm \mathbf{y}_m \in \mathbb{S}^{n-1} \). For 
\( i = 1,2,\ldots m \),  let 
\({\hat {\mathbf{y}}}_i \coloneqq (2 \Lambda(\mathbf{y}_i))^{1/\alpha} \mathbf{y_i} \) and
let \( S_1,S_2, \ldots, S_m \sim S_\alpha \) be i.i.d. Then we have (see 
Section~\ref{section: background}) that
\[
X  = (X_1,X_2, \ldots, X_n) \overset{\mathcal{D}}{=} {\hat {\mathbf{y}}}_1 S_1 + \ldots + {\hat {\mathbf{y}}}_m S_m.
\]

The rest of the proof will be divided into two steps. In the first step, we give a proof under the additional assumption that,  for some positive integer \( k \),
\begin{equation}\label{eq: the k point condition}
\forall ( s_1,\ldots, s_m) \in \mathbb{R}^m  \colon \sum_{i=1}^m s_i {\hat {\mathbf{y}}}_i \in \bar E \Rightarrow | \{ i \in [m] \colon s_i \not = 0 \}| \geq k.
\end{equation}
In the second step, we show that this additional assumption can be removed.

\paragraph{Step 1.}

Assume that~\eqref{eq: the k point condition} holds. Given this assumption, we make the following observations.
\begin{description}
\item[(O1)] The assumption on \( \bar E \) in~\eqref{eq: the k point condition} implies that there is \( \varepsilon_0 > 0 \) 
such that if \( s_1, s_2, \ldots, s_m \) are such that \( \sum_{i = 1}^m s_i {\hat {\mathbf{y}}}_i \in \bar E  \), then there is a set \( J \subseteq [m] \), with \( | J | \geq k \), 
such that \( |s_i |> \varepsilon_0 \) for all \( i \in J \).
\item[(O2)] It follows from the previous observation and~\eqref{eq: basictail} that
\begin{equation}\label{eq: order}
P \bigl(X \in h \bar E \bigr) = O \bigl(h^{-k \alpha} \bigr) .
\end{equation}
\item[(O3)] For any \( \varepsilon' > 0 \),  \[ P \Bigl[|\{ i \in [m] \colon |S_i|> \varepsilon' h|>k \Bigr] = o(h^{-\alpha k} ) .\] 
\end{description}

For each \( \delta > 0 \), recall
\[
 E_{\delta,+} = \bigl\{ \mathbf{x} \in \mathbb{R}^n \colon d(\mathbf{x},E) < \delta \bigr\} \]
 and define
 \[
 E_{\delta,-} \coloneqq \bigl\{ \mathbf{x} \in E \colon d(x,\partial E)> \delta \bigr\}.
 \]
Using the observations above, it follows that for any \( \varepsilon' \in (0,\varepsilon_0 ) \)
\begin{align*}
 & P\bigl( X \in hE  \bigr) 
  =
 \sum_{J \subseteq [m] \colon |J| = k} P\Bigl[ X \in hE \text{ and }   \forall i \in [m] \backslash J \colon |S_i| \leq \varepsilon' h \Bigr]  + o\bigl(h^{-k \alpha}\bigr).
 \end{align*}
Fix $\delta >0$ arbitrarily and set  
\( \varepsilon' = \varepsilon_0 \land \bigl( \delta/((m-k) \sup_{i \in [m]} \| {\hat { \mathbf{y}}}_i \|_2 ) \bigr) \). 
Note that for each set \( J \), the event in question implies that
\(  |S_i|\leq \delta h/\bigl((m-k)  \sup_{i \in [m]} \| {\hat { \mathbf{y}}}_i \|_2 ) \bigr) \) for all \( i \in [m] \backslash J \), 
which in turn implies that \( \| \sum_{i \in [m] \backslash J} S_i \hat y_i \|_2 \leq \delta h \). 
Hence the previous equation can be bounded from below by
 \begin{align*}
& \sum_{J \subseteq [m] \colon |J| = k} P\Bigl[\, \sum_{i \in J} S_i {\hat {\mathbf{y}}}_i \in hE_{\delta,-}, \,  \text{ and }   \forall i \in [m] \backslash J \colon |S_i| \leq \varepsilon' h \Bigr] + o(h^{-k \alpha})
\\&\qquad \geq
\sum_{J \subseteq [m] \colon |J| = k} P\Bigl[ \, \sum_{i \in J} S_i {\hat {\mathbf{y}}}_i \in hE_{\delta,-}\Bigr] + o(h^{-k \alpha}).
\end{align*} 
Let \( f \) denote the common probability density function of \( S_1 \), \(S_2 \), \( \ldots \), \(S_m \). 
By (O1), we have that for a fixed set \( J \) of size $k$,
\begin{align*}
&   P\Bigl[ \,\sum_{i \in J} S_i {\hat {\mathbf{y}}}_i \in hE_{\delta,-} \Bigr]
=  \int_{ s_1, \ldots, s_k \in \mathbb{R} \colon \atop |s_1|,\ldots, |s_k | > \varepsilon_0 h}
I\Bigl( \sum_{i \in J} s_i {\hat {\mathbf{y}}}_i \in hE_{\delta,-} \Bigr) \, \prod_{i \in J} f(s_i) \, ds_i.
\end{align*}
Using first~\eqref{eq: densitytail} and then (O1), it follows that
\begin{align*}
&   P\Bigl[ \,\sum_{i \in J} S_i {\hat {\mathbf{y}}}_i \in hE_{\delta,-} \Bigr]  
  \sim \frac{C_\alpha^k}{2^k}  
  \int_{ s_1, \ldots, s_k \in \mathbb{R} \colon \atop |s_1|,\ldots, |s_k | > \varepsilon_0 h}
I \Bigl( \sum_{i \in J} s_i {\hat {\mathbf{y}}}_i \in hE_{\delta,-} \Bigr) \, \prod_{i \in J} \alpha s_i^{-(1+ \alpha)} \, ds_i
\\&\qquad = \frac{C_\alpha^k}{2^k} \int_{\mathbb{R}^k}
I \Bigl( \sum_{i \in J} s_i {\hat {\mathbf{y}}}_i \in hE_{\delta,-} \Bigr) \,  \prod_{i \in J} \alpha s_i^{-(1+ \alpha)} \, ds_i.
\end{align*}
Making the change of variables \(   (2 \Lambda(\mathbf{y}_i))^{1/\alpha}s_i /h \mapsto s_i\), we obtain
\begin{align*}
&\frac{C_\alpha^k}{2^k}   \cdot \biggl[ \, \prod_{i \in J} 2\Lambda(\mathbf{y}_i )h^{-\alpha} \biggr] \int_{\mathbb{R}^k}
I \Bigl( \sum_{i \in J} s_i { {\mathbf{y}}}_i \in E_{\delta,-} \Bigr) \,  \prod_{i \in J} \alpha s_i^{-(1+ \alpha)} \, ds_i  =
\\&\qquad 
 C_\alpha^k h^{-k \alpha}  
\biggl[\, \prod_{i \in J} \Lambda(\mathbf{y}_i ) \biggr] \int_{\mathbb{R}^k}
 I\Bigl( \sum_{i \in J} s_i { {\mathbf{y}}}_i \in E_{\delta,-} \Bigr) \, \prod_{i \in J} \alpha s_i^{-(1+ \alpha)} \, ds_i.
\end{align*}
Summing over all \( J \subseteq [m] \) with \( |J| = k \), we  get
\begin{align*}
& C_\alpha^k h^{-k \alpha}  
\sum_{J \subseteq [m] \colon |J|=k} 
\Biggl[
 \biggl[ \, \prod_{i \in J} \Lambda(\mathbf{y}_i ) \biggr] \int_{\mathbb{R}^k}
 I\Bigl( \sum_{i \in J} s_i { {\mathbf{y}}}_i \in E_{\delta,-} \Bigr) \, \prod_{i \in J} \alpha s_i^{-(1+ \alpha)} \, ds_i \Biggr]
\\&\qquad =
C_\alpha^k h^{-k \alpha}  
\int_{\mathbb{R}^k}
  \sum_{J \subseteq [m] \colon |J|=k} 
 \Biggl[ \biggl[ \,  \prod_{i \in J} \Lambda(\mathbf{y}_i ) \biggr] \, 
 I \Bigl( \sum_{i \in J} s_i { {\mathbf{y}}}_i \in E_{\delta,-} \Bigr) \,  \prod_{i \in JX} \alpha s_i^{-(1+ \alpha)} \, ds_i \Biggr].
\end{align*}
Now note that 
\begin{enumerate}[(i)]
	\item each pair of points, \( \pm \mathbf{y}_i \), \( i = 1,2,\ldots , m \), is counted only once in the 
last equation  and
	\item each set \( J \) of size \( k \) can be ordered exactly in \( k! \) ways.
\end{enumerate}
Using this, and symmetry, it follows that the previous equation is equal to
\begin{align*}
& 
\frac{ C_\alpha^k h^{-k \alpha}}{2^k k!}  
\int_{\mathbb{R}^k}
  \Lambda^k \Biggl( \biggl\{  \mathbf{x}_1, \ldots, \mathbf{x}_k \in \mathbb{S}^{n-1} \colon 
 \sum_{i =1}^k s_i { {\mathbf{x}}}_i \in E_{\delta,-} \biggl\} \Biggl)\,  \prod_{i =1}^k \alpha s_i^{-(1+ \alpha)} \, ds_i=
 \\&\qquad
\frac{ C_\alpha^k h^{-k \alpha}}{k!}  
\int_{\mathbb{R}_+^k}
  \Lambda^k \Biggl( \biggl\{ \mathbf{x}_1, \ldots, \mathbf{x}_k \in \mathbb{S}^{n-1} \colon 
 \sum_{i =1}^k s_i { {\mathbf{x}}}_i \in E_{\delta,-} \biggl\} \Biggl)\,  \prod_{i =1}^k \alpha s_i^{-(1+ \alpha)} \, ds_i
\end{align*}
and hence, by taking $h$ to infinity and then $\delta$ to zero,
\begin{align*}
&\liminf_{h \to \infty} h^{k \alpha} P(X \in hE)\nonumber
\\&\qquad \geq \lim_{\delta \to 0} 
\frac{ C_\alpha^k}{k!}  
\int_{\mathbb{R}^k_+}
  \Lambda^k \Biggl( \biggl\{ \mathbf{x}_1, \ldots, \mathbf{x}_k \in \mathbb{S}^{n-1} \colon 
 \sum_{i =1}^k s_i { {\mathbf{x}}}_i \in E_{\delta,-} \biggl\} \Biggl)\,  \prod_{i =1}^k \alpha s_i^{-(1+ \alpha)} \, ds_i.
 \end{align*}
Using the monotone convergence theorem, this implies in particular that
 \begin{align*}
&\liminf_{h \to \infty} h^{k \alpha} P(X \in hE)\nonumber
\geq L(E^o,k,\alpha)
 \end{align*}
and hence the lower bound in Theorem~\ref{theorem: later tails} holds. The proof of the upper bound is 
completely analogous and slightly easier, and is hence omitted here.

\paragraph{Step 2.}
It now remains only to show that the assumption on \( \bar E \) given in~\eqref{eq: the k point condition} can 
be removed. So we now assume that
\begin{equation*}
\exists ( s_1,\ldots, s_m) \in \mathbb{R}^m  \colon \sum_{i=1}^m s_i {\hat {\mathbf{y}}}_i \in  \bar E \text{ and }  | \{ i \in [m] \colon s_i \not = 0 \}| < k.
\end{equation*}
 Then it is easy to see that the integral in the definition of \( L(E_{\delta,+},k,\alpha)\) is infinite for every \( \delta > 0\) and hence the upper bound  holds without the assumption on \( \bar E \).

We now show that the lower bound holds also without the assumption on \( \bar E \). To this end,  assume first that there is \(  \mathbf{t} \coloneqq ( t_1,\ldots, t_m) \in \mathbb{R}^m  \) such that 
\begin{enumerate}[(i)]
\item \( \sum_{i=1}^m t_i {\hat {\mathbf{y}}}_i \in    E^o  \), and
\item \( | \{ i \in [m] \colon t_i \not = 0 \}| = \ell <k \).
\end{enumerate}
Assume further that \( \ell \) is the smallest integer for which such a point \( \mathbf{t} \) exists. Then, for all sufficiently small \( \delta > 0 \) we have that  \( \sum_{i=1}^m t_i {\hat {\mathbf{y}}}_i  \in E_{\delta, -} \), and
\begin{equation*}
\forall ( t_1,\ldots, t_m) \in \mathbb{R}^m  \colon \sum_{i=1}^m t_i {\hat {\mathbf{y}}}_i \in  \overline {E_{\delta,-}} \Rightarrow | \{ i \in [m] \colon t_i \not = 0 \}| \geq \ell.
\end{equation*}
Since by the first part of the proof, we have that  
\[
\liminf_{h \to \infty} h^{\ell \alpha } P(X \in hE) \geq \liminf_{h \to \infty} h^{\ell \alpha } P(X \in hE_{\delta,-}) = L(E_{\delta,-},\ell,\alpha) > 0
\]
it follows that
\[
\liminf_{h \to \infty} h^{k \alpha } P(X \in hE) = \infty  
\]
and hence the lower bound is still valid in this case. If no such point \( \mathbf{t} \) exists, then we have that
\[
\forall ( t_1,\ldots, t_m) \in \mathbb{R}^m  \colon \sum_{i=1}^m t_i {\hat {\mathbf{y}}}_i \in   E^o \Rightarrow  | \{ i \in [m] \colon t_i \not = 0 \}| \geq k.
\]
Using Step 1, this implies in particular that for all \( \delta >  0 \), we have that
\[  
\liminf_{h \to \infty} h^{k \alpha} P(X \in hE) \geq   \liminf_{h \to \infty}  h^{k \alpha} P(X \in hE_{\delta,-}) =   L(E_{\delta,-}, k, \alpha).
\]
Since \( L(E_{\delta,-}, k, \alpha) \) is monotone in \( \delta \), the desired conclusion follows by applying the monotone convergence theorem.
This concludes the proof.

\end{proof}

\begin{remark}
We observe that we have shown that if there is a matrix 
\( A = ( {\hat {\mathbf{y}}}_1 , {\hat {\mathbf{y}}}_2 , \ldots, {\hat {\mathbf{y}}}_m ) \) such 
that  \( X  \overset{\mathcal{D}}{=} A (S_1,\ldots, S_m) \), where \( S_1, S_2, \ldots , S_m \sim S_\alpha \)  
are i.i.d.\ \!\! (or equivalently that the spectral measure is finitely supported), 
then, for any set \( E \subseteq \mathbb{R}^n \), 
\begin{align*}
&\frac{1}{k!}\int_0^\infty
 \cdots
\int_{0}^\infty
\Lambda^k \Biggl( \biggl\{ \mathbf{x}_1, \ldots, \mathbf{x}_k \in \mathbb{S}^{n-1} \colon   
\sum_{i=1}^k s_i\mathbf{x}_i \in  E  \biggl\} \Biggl)\, 
    \prod_{i=1}^k \alpha  s_i^{-(1+\alpha)}\, ds_i
\\&\qquad =
2^{-k} \!\!\!\!\sum_{J \subseteq [m] \colon | J | = k}
\int_\mathbb{R}
 \cdots
\int_\mathbb{R}
I\biggl(  
\sum_{i \in J} s_i
{\hat {\mathbf{y}}}_i \in  E   \biggr)\, 
 \prod_{i=1}^k \alpha  s_i^{-(1+\alpha)}\, ds_i.
\end{align*}
\end{remark}

\begin{remark}
With only small adjustments of the proof above, the assumption that \( X \) is symmetric can be dropped. To do this, one replaces the matrix representation used above with the corresponding representation for when \( X \) is not symmetric (i.e.\ define \( A \) by \( A(\cdot, i) = (\Lambda(\mathbf{y}_i))^{1/\alpha} \mathbf{y}_i \) and \( S_i \) is a so-called totally skewed \( \alpha \)-stable random variable with scale one, and then adjust the proof accordingly. This is not as easy to do however when \( \Lambda \) is not finitely supported.
\end{remark}

\begin{remark}
By Theorem 1(ii) in~\cite{bnr1993}, any multivariate stable distribution \( X\sim S_\alpha(\Lambda) \)   can be approximated by a multivariate stable distribution \( X_\varepsilon \sim S_\alpha(\Lambda_\varepsilon) \) which is such that 
\begin{enumerate}[(i)]
\item \( \Lambda_\varepsilon \) is finitely supported, and
\item \mbox{} \vspace{-3ex}  \[   \sup_{E \colon E \subseteq \mathbb{R}^n,\atop E \text{ is a Borel set}}  \bigl| P(X \in E) - P(X_\varepsilon \in E) \bigr| < \varepsilon . \]
\end{enumerate}
Here \( \Lambda_\varepsilon \) is chosen by partitioning the unit sphere into a finite number of sets of small diameter, and then concentrating all the mass of \( \Lambda \) in each such  set at an arbitrarily chosen  point in the set.

This result, together with the proof for the finitely supported case, is however not sufficient to be able to make the same conclusion for any spectral measure.  To see this,  let \( E \) and \( \Lambda \) be as in Example~\ref{example: from paper}, and let \( \alpha \in (0,1) \) so that Example~\ref{example: from paper} gives that \( \lim_{h \to \infty} h^{2 \alpha}P(X \in h E ) \in (0,\infty)\).  Then there are \( \Lambda_\varepsilon \) as above which are arbitrarily close to \( \Lambda \) but for which the corresponding limit is infinite by Theorem~\ref{theorem: later tails}. 
\end{remark}

To be able to give the proof of Theorem~\ref{theorem: later tails} in 
the general setting, we will first need the following lemma.
The special case \( k = 2 \) was stated in~\cite{st1994} (see Equation 1.4.8 on p.\ 27), 
but no proof is given there. A sketch of the proof of this particular case was provided in private 
correspondence with one of the authors.

\begin{lemma}\label{lemma: moments of tails}
Let \( (W_i )_{i \geq 1} \) be a sequence of i.i.d.\ random variables with  \( 0\le W_i \leq 1\), 
\( (\varepsilon_i )_{i \geq 1} \) be a sequence of i.i.d.\ random variables with 
\( \varepsilon_i\sim \unif ( \{ -1,1 \}) \) and  \( (\Gamma_i)_{i \geq 1} \) 
be the arrival times of a Poisson process with rate one where we
assume that these three sequences are independent of each other.
Next let \( \alpha \in (0,2) \), \( k \geq 2 \) be an integer
and \( \varepsilon \in (0, \min(\{ \alpha, (k-1)(2-\alpha) \})) \). Then
\[
\E \Biggl[ \biggl| \sum_{i = k}^\infty \varepsilon_i \Gamma_i^{-1/\alpha} W_i \biggr|^{(k-1)\alpha + \varepsilon} \Biggr] < \infty.
\] 
\end{lemma}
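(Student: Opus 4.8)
The plan is to condition on the ``radial'' randomness and thereby reduce the problem to a weighted sum of independent signs, to which a Khintchine-type inequality applies. Write \( a_i \coloneqq \Gamma_i^{-1/\alpha} W_i \) and \( T \coloneqq \sum_{i=k}^\infty \varepsilon_i a_i \), and set \( p \coloneqq (k-1)\alpha + \varepsilon \). Conditionally on the \( \sigma \)-field \( \mathcal{F} \coloneqq \sigma\bigl((\Gamma_i)_{i\ge 1},(W_i)_{i\ge 1}\bigr) \), the weights \( a_i \) are deterministic and \( T \) is a sum of the independent symmetric terms \( \varepsilon_i a_i \). Since \( 0 \le W_i \le 1 \) and \( \alpha < 2 \), we have \( \sum_{i\ge k} a_i^2 \le \sum_{i \ge k}\Gamma_i^{-2/\alpha} < \infty \) almost surely (the summand behaves like \( i^{-2/\alpha} \) because \( \Gamma_i \approx i \)), so conditionally the series converges in \( L^2 \) and Khintchine's inequality gives, for a finite constant \( B_p \),
\[
\E\bigl[\,|T|^p \mid \mathcal{F}\,\bigr] \le B_p \Bigl(\sum_{i=k}^\infty a_i^2\Bigr)^{p/2} \le B_p \Bigl(\sum_{i=k}^\infty \Gamma_i^{-2/\alpha}\Bigr)^{p/2}.
\]
(For \( 0<p\le 2 \) one may take \( B_p=1 \) by Jensen; for \( p\ge 2 \) this is the classical Khintchine bound.) Taking expectations, the lemma reduces to showing \( \E\bigl[V^{p/2}\bigr]<\infty \), where \( V \coloneqq \sum_{i\ge k}\Gamma_i^{-2/\alpha} \).

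To control \( \E[V^{p/2}] \) I would use the inverse-moment formula \( \E\bigl[\Gamma_i^{-\beta}\bigr]=\Gamma(i-\beta)/\Gamma(i) \), valid precisely when \( \beta<i \), together with \( \Gamma(i-\beta)/\Gamma(i)\sim i^{-\beta} \) as \( i\to\infty \). When \( p\le 2 \) subadditivity of \( x\mapsto x^{p/2} \) yields \( \E[V^{p/2}]\le\sum_{i\ge k}\E[\Gamma_i^{-p/\alpha}] \), while when \( p\ge 2 \) Minkowski's inequality yields \( \E[V^{p/2}]^{2/p}\le\sum_{i\ge k}\bigl(\E[\Gamma_i^{-p/\alpha}]\bigr)^{2/p} \). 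In either case finiteness rests on two points: the leading (\( i=k \)) term is integrable iff \( p/\alpha<k \), i.e. iff \( \varepsilon<\alpha \); and the tail of the series converges because the general term decays like \( i^{-2/\alpha} \) (respectively \( i^{-p/\alpha} \) when \( p\le 2 \)), which is summable since \( \alpha<2 \) (and since \( p=(k-1)\alpha+\varepsilon>\alpha \) for \( k\ge 2 \)). Both hold under the hypotheses, so \( \E[V^{p/2}]<\infty \) and the lemma follows.

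The main obstacle is the leading term \( \Gamma_k^{-2/\alpha} \): it is the smallest arrival time surviving after the first \( k-1 \) points are deleted, and its inverse moment of order \( p/\alpha \) is finite only for \( p<k\alpha \), which is exactly where the restriction \( \varepsilon<\alpha \) is forced. This reflects the genuine heavy-tailed mechanism — the event that \( \Gamma_k \) is atypically small makes \( |T| \) large, with \( P(|T|>t) \) of order \( t^{-k\alpha} \) — so no moment beyond order \( k\alpha \) can exist. A secondary technical point is that for \( p>2 \) the naive conditional \( L^2 \) estimate is false and one genuinely needs the Khintchine constant \( B_p \); relatedly, since the series is only conditionally convergent when \( \alpha\ge 1 \), the inequality should first be established for the partial sums \( T_N=\sum_{i=k}^N\varepsilon_i a_i \) and then passed to the limit by Fatou's lemma. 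I note finally that this argument uses only \( \varepsilon<\alpha \); the stated admissible range \( \bigl(0,\min(\alpha,(k-1)(2-\alpha))\bigr) \) is contained in \( (0,\alpha) \) and is therefore covered.
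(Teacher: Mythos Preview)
Your argument is correct and takes a genuinely different route from the paper's. Both proofs begin by conditioning on \(\mathcal{F}=\sigma((\Gamma_i),(W_i))\), but the paper then bounds the conditional \(2(k-1)\)-th moment of the Rademacher sum, pushes back to exponent \(\beta=(k-1)\alpha+\varepsilon\) via Jensen (this is where the constraint \(\varepsilon<(k-1)(2-\alpha)\), i.e.\ \(\beta/(2(k-1))<1\), is used), expands the resulting \((k-1)\)-fold sum with the \(c_r\)-inequality, and finishes with a somewhat delicate estimate on \(\sum_{m\le i_1\le\cdots\le i_{k-1}}\E\bigl[\prod_j \Gamma_{i_j}^{-\beta/(\alpha(k-1))}\bigr]\) using independence of Poisson increments. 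You instead apply Khintchine's inequality directly at exponent \(p=\beta\), reducing immediately to \(\E\bigl[(\sum_{i\ge k}\Gamma_i^{-2/\alpha})^{p/2}\bigr]\), which you dispatch in one line by subadditivity (\(p\le 2\)) or Minkowski (\(p\ge 2\)) together with \(\E[\Gamma_i^{-p/\alpha}]\sim i^{-p/\alpha}\). Your approach is shorter and, as you observe, only uses \(\varepsilon<\alpha\); the second constraint \(\varepsilon<(k-1)(2-\alpha)\) is an artifact of the paper's Jensen step and is not needed for the conclusion. The Fatou/partial-sums remark is the right way to handle the conditionally convergent regime \(\alpha\ge 1\).
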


\begin{proof}[Proof of Lemma~\ref{lemma: moments of tails}]
To simplify notation, write    \( \beta \coloneqq (k-1)\alpha + \varepsilon \). We then need to show that 
\[
\E \Biggl[ \biggl| \sum_{i = k}^\infty \varepsilon_i \Gamma_i^{-1/\alpha} W_i \biggr|^{\beta} \Biggr] < \infty.
\] 
To this end, note first that for any fixed \( m\geq k \)  we have that
\begin{equation*}\label{eq: first moment bound on finite stable sum}
\begin{split}
&\E \Biggl[ \biggl| \sum_{i=k}^m \varepsilon_i \Gamma_i^{-1/\alpha} W_i \biggr|^\beta \Biggr] \leq \E \Biggl[ \biggl[ \, \sum_{i=k}^m \Gamma_i^{-1/\alpha} W_i \biggr]^\beta \Biggr]
\leq \E \Biggl[ \biggl[ \,   \sum_{i=k}^m   \Gamma_i^{-1/\alpha}   \biggr]^\beta \Biggr]
\\&\qquad \leq \E \biggl[ \Bigl(   (m-k+1) \Gamma_k^{-1/\alpha}   \Bigr)^\beta \biggr]
=   (m-k+1)^\beta \,  \E \Bigl[  \Gamma_k^{-\beta/\alpha}   \Bigr].
\end{split}
\end{equation*}
Since \( k > \beta/ \alpha \), we have that \( \E \Bigl[ \Gamma_k^{-\beta/\alpha}\Bigr] < \infty \),
and hence 
\begin{equation}\label{eq: sum of first terms is finite}
\E \Biggl[ \biggl| \sum_{i=k}^m \varepsilon_i \Gamma_i^{-1/\alpha} W_i \biggr|^\beta \Biggr] < \infty
\end{equation}
for any fixed \( m \geq k \).

Now recall that  for any real-valued random variables \( X \) and \( Y \) with \( \E \left[ |X|^\beta \right]<\infty \) and \( \E \left[ |Y|^\beta \right]<\infty \) we have that \( \E [ |X+Y|^\beta ] < \infty.\) Using~\eqref{eq: sum of first terms is finite}, the conclusion of the lemma will thus follow if we can prove that
\begin{align*}
\E \Biggl[ \biggl| \sum_{i=m}^\infty \varepsilon_i \Gamma_i^{-1/\alpha} W_i \biggr|^{\beta} \Biggr] < \infty
\end{align*}
for some \( m \geq k\).
To this end, fix  \( m >\beta/\alpha \cdot k /(k-1) \).  Then
\begin{align*}
&\E \Biggl[ \biggl| \sum_{i = m}^\infty \varepsilon_i \Gamma_i^{-1/\alpha} W_i \biggr|^\beta \Biggr]  =
\E \Biggl[   \biggl[ \biggl( \,  \sum_{i = m}^\infty \varepsilon_i \Gamma_i^{-1/\alpha} W_i \biggr)^{2(k-1)} \biggr]^{\beta/(2(k-1))} \Biggr]  
\\&\qquad =
\E \Biggl[ \E_{(\varepsilon_i)}\Biggl[  \biggl( \Bigl( \sum_{i = m}^\infty \varepsilon_i \Gamma_i^{-1/\alpha} W_i \Bigr)^{2(k-1)} \biggr)^{\beta/(2(k-1))} \mid \sigma\bigl( (W_i),\, (\Gamma_i)\bigr)  \Biggr]  \Biggr].
\end{align*}
Since \( \beta/(2(k-1)) = ((k-1)\alpha + \varepsilon)/(2(k-1))< 1 \) by the assumption on \( \varepsilon \), we can apply Jensen's inequality to bound this expression from above by
\begin{align*}
&\E \Biggl[ \E_{(\varepsilon_i)}\biggl[ \Bigl( \sum_{i = m}^\infty \varepsilon_i \Gamma_i^{-1/\alpha} W_i \Bigr)^{2(k-1)}  \mid \sigma\bigl( (W_i),\, (\Gamma_i) \bigr) \biggr]^{\beta/(2(k-1))}  \Biggr]
\\&\qquad \leq
\E \Biggl[  \biggl[ (2(k-1))! \!\!\!\sum_{i_1, \ldots, , i_{k-1} \colon \atop m \leq i_1 \leq  \ldots \leq  i_{k-1}} \!\prod_{j=1}^{k-1}   \Gamma_{i_j}^{-2/\alpha}  W_{i_j}^2   \biggr]^{\beta/(2(k-1))}  \Biggr].
\end{align*}
Now we can again use the fact that \( \beta/(2(k-1))<1 \) and the so-called 
\( c_r \)-inequality (see e.g.\ Theorem 2.2 in \cite{gut}) to move this exponent into the summands to bound the previous 
expression  from above by
\begin{align*}
&((2(k-1))!)^{\beta/(2(k-1))}\; \E \Biggl[ \,  \sum_{i_1, \ldots, , i_{k-1} \colon \atop m \leq i_1 \leq  \ldots \leq  i_{k-1}} \!\!\!\!\! \Gamma_{i_j}^{-\beta/(\alpha(k-1))}  W_{i_j}^{\beta/(k-1)}  \Biggr]
\\&\qquad =
 ((2(k-1))!)^{\beta/(2(k-1))} \!\!\!\!\!\!\!\! \sum_{i_1, \ldots, , i_{k-1} \colon \atop m \leq i_1 \leq  \ldots \leq  i_{k-1}} \!\!\!\!\!  \E \Biggl[ \, \prod_{j=1}^{k-1}   \Gamma_{i_j}^{-\beta/(\alpha(k-1))}  W_{i_j}^{\beta/(k-1)}  \Biggr]
 \\&\qquad \leq
 ((2(k-1))!)^{\beta/(2(k-1))} \!\!\!\!\!\!\!\!  \sum_{i_1, \ldots, , i_{k-1} \colon \atop m \leq i_1 \leq  \ldots \leq  i_{k-1}} \!\!\!\!\! \E \Biggl[ \, \prod_{j=1}^{k-1}   \Gamma_{i_j}^{-\beta/(\alpha(k-1))}    \Biggr].
\end{align*}
In particular, this implies that it now only remains to show that
\begin{equation}\label{eq: the rest of the sum}
\sum_{i_1, \ldots, , i_{k-1} \colon \atop m \leq i_1 \leq  \ldots \leq  i_{k-1}} \!\!\!\!\! \E \Biggl[\, \prod_{j=1}^{k-1}   \Gamma_{i_j}^{-\beta/(\alpha(k-1))}    \Biggr] < \infty.
\end{equation}
 
To do this, first fix  \( \gamma \in \mathbb{R}_+ \). If \(  i \in \mathbb{Z}_+ \), then 
\(\E [\Gamma_i^{-\gamma}]<\infty \) if and only if \( i > \gamma \). 
Moreover, for such \( i \) and \( \gamma \) we easily have that
\(\E \left[ \Gamma_i^{-\gamma}\right]=\frac{\Gamma(i-\gamma)}{\Gamma(i)}\) .
By Stirling's formula, it follows that for a fixed \( \gamma \) we have that \( \E \left[ \Gamma_i^{-\gamma}\right]  \sim i^{-\gamma} \) and hence \( \E \left[ \Gamma_i^{-\gamma}\right] < C_\gamma i^{-\gamma} \) for some constant \( C_\gamma \ge 1 \) and all \( i > \gamma \).

Now assume that \( 1 \leq i_1 \leq \ldots \leq i_{k-1} \) is a sequence of integers.  
Then for \( j=2,3, \ldots, k-1 \) we have that \( \Gamma_{i_j} \geq \Gamma_{i_1} \), and 
\( \Gamma_{i_j} \geq \Gamma_{i_j} - \Gamma_{i_{j-1}} \). The random variables 
\( \Gamma_{i_j} - \Gamma_{i_{j-1}}  \)  are independent and equal in distribution to 
\( \Gamma_{i_j-i_{j-1}} \) if \( i_j \not = i_{j-1} \) noting that \(\Gamma_0=0 \).
Using this, it follows that for any fixed integer \( M > 0 \) we have that
\begin{align*}
\E \biggl[ \, \prod_{j=1}^{k-1} \Gamma_{i_j}^{-\gamma}\biggr] 
&\leq  \E \biggl[ \Gamma_{i_1}^{-\gamma - \gamma \sum_{j \geq 2}^{k-1} \mathbf{1}(i_j - i_{j-1} < M)} \biggr] \; \cdot \;  \prod^{k-1}_{j \geq 2 \colon \atop \mathclap{ i_j-i_{j-1} \geq M}}  \E \Bigl[ \Gamma_{i_j-i_{j-1}}^{ -\gamma}  \Bigr] 
\\&\leq  \E \Bigl[ \Gamma_{i_1}^{-\gamma  } \cdot I(\Gamma_{i_1} \geq 1) + \Gamma_{i_1}^{-(k-1)\gamma}\cdot I(\Gamma_{i_1} < 1) \Bigr] \; \cdot \; \prod^{k-1}_{j \geq 2 \colon \atop \mathclap{ i_j-i_{j-1} \geq M}}  \E \Bigl[ \Gamma_{i_j-i_{j-1}}^{ -\gamma} \Bigr]
\\&\leq  \E \Bigl[ \Gamma_{i_1}^{-\gamma  }  + \Gamma_{i_1}^{-(k-1)\gamma}  \Bigr] \; \cdot \; \prod^{k-1}_{j \geq 2 \colon \atop \mathclap{ i_j-i_{j-1} \geq M}}  \E \Bigl[ \Gamma_{i_j-i_{j-1}}^{ -\gamma} \Bigr].
\end{align*}
If \( i_1 > (k-1)\gamma \) and \( M > \gamma \), then, using the above, 
this is bounded from above by
\begin{align*}
(C_\gamma+C_{(k-1)\gamma})\cdot   {i_1}^{-\gamma}  \cdot 
C_\gamma^{k-2}  \prod^{k-1}_{j \geq 2 \colon \atop \mathclap{i_j-i_{j-1} \geq M}}   (i_j-i_{j-1})^{ -\gamma} .
\end{align*}

In particular, if we let  \( \gamma = \beta/(\alpha(k-1)) = (\alpha(k-1) + \varepsilon)/(\alpha(k-1))> 1 \) and \( M = m \), then for \( i_1 \geq m \) we have that
\[
i_1 \geq m > \beta/\alpha \cdot k/(k-1)  = k \gamma > (k-1)\gamma
\]
and therefore, since $k\ge 2$, 
\(
M = m > \gamma.
\)
Hence it follows that~\eqref{eq: the rest of the sum} is bounded from above by 
\begin{align*}
  &\bigl(C_{\beta/(\alpha(k-1))} +C_{\beta/\alpha} \bigr) \, C^{k-1}_{\beta/(\alpha(k-1))} 
   \!\!\!\!\!\!
  \sum_{i_1, \ldots, , i_{k-1} \colon \atop m \leq i_1 \leq  \ldots \leq  i_{k-1}} \!\!\!\!\!\!\!
  i_1^{- \beta/(\alpha(k-1))} 
  \!\!\!\!\prod_{\substack{j \in \{ 2,3, \ldots, k-1 \} \colon \\i_{j}-i_{j-1} \geq m}} \!\!\!\!\!\! (i_j - i_{j-1})\mathrlap{{}^{- \beta/(\alpha(k-1))}}.
\end{align*}
This implies in particular that it only remains to show that
\[
\sum_{i_1, \ldots, , i_{k-1} \colon \atop m \leq i_1 \leq  \ldots \leq  i_{k-1}} \!\!\!\!\! i_1^{-\beta/(\alpha(k-1))} \prod_{\substack{j \in \{ 2,3, \ldots, k-1 \} \colon\\  i_{j}-i_{j-1} \geq m}}\!\!\!\!\!\!  (i_j - i_{j-1})^{-\beta/(\alpha(k-1))}< \infty.
\]
To see this, we first change the order of summation as follows. First, we will sum over all possible 
choices of \( i_1 \). Then we sum over the number \( G \) of terms in the product, which will 
range between \( 0 \) and \( k-2 \). Finally, we sum also over the possible choices of 
\( \ell_j \coloneqq i_j - i_{j-1} \) in the product, which will range from \( m \) to infinity. 
To sum over all possible sequences \( m \leq i_1 \leq \ldots \leq i_{k-1} \), we find an upper bound 
on the number of ways to choose the differences \( i_j - i_{j-1}  \) which are smaller than \( m \) 
and also, on the number of ways to choose which of the differences are larger than or equal to 
\( m \). The former 
of these quantities is clearly bounded from above by \( m^{k-2} \), and the latter is equal 
to \( \binom{k-2}{G} < 2^{k-2}\). Putting these observations together, we get
\begin{align*}
&\sum_{i_1, \ldots, , i_{k-1} \colon \atop m \leq i_1 \leq  \ldots \leq  i_{k-1}} \!\! \prod_{j \in \{ 2,3, \ldots, k-1 \} \colon \atop \mathclap{i_{j}-i_{j-1} \geq m}} \!\!\!\!\!\!\!\!\!\! (i_j - i_{j-1})^{-\beta/(\alpha(k-1))}
\\&\qquad  \leq 
(2m)^{k-2}
\sum_{i_1 = m}^\infty  i_1^{-\beta/(\alpha(k-1))} 
\sum_{G = 0}^{k-2} 
\; \sum_{\ell_1,\ldots, \ell_G =m}^\infty 
\; \prod_{j=1}^G \ell_j^{-\beta/(\alpha(k-1))} 
\\&\qquad  = 
(2m)^{k-2}\sum_{i_1 = m}^\infty  i_1^{-\beta/(\alpha(k-1))} \sum_{G = 0}^{k-2}   \Biggl( \, \sum_{\ell =m}^\infty  \ell^{-\beta/(\alpha(k-1))} \Biggr)^G
\\&\qquad  = 
(2m)^{k-2}  \sum_{G = 1}^{k-1}   \Biggl( \,  \sum_{\ell =m}^\infty  \ell^{-\beta/(\alpha(k-1))} \Biggr)^G\!\!\!\!.
\end{align*}
Since \( \beta/(\alpha(k-1)) = (\alpha(k-1) + \varepsilon)/(\alpha(k-1))>1 \), the desired conclusion now follows.

\end{proof}

We now state the following lemma which will be used in the proof of Theorem~\ref{theorem: later tails}. 
For a proof of this lemma we refer the reader to~\cite{st1994}.

\begin{lemma}[Theorem 3.10.1 in~\cite{st1994}]\label{lemma: multivariate converging sum}
Let \( \Lambda \) be a symmetric spectral measure on \( \mathbb{S}^{n-1} \). 
Furthermore, let \( C_\alpha \) be defined by \(   P(Y \geq h) \sim C_\alpha h^{-\alpha}/2 \) for 
\( Y \sim S_\alpha \), let \( (\Gamma_i)_{i\geq 1} \) be the arrival times of a rate one 
Poisson process and let \( (\mathbf{W}_i)_{i\geq 1 } \) be i.i.d., each with distribution 
\( \bar \Lambda \coloneqq \Lambda/\Lambda(\mathbb{S}^{n-1}) \) (the normalized spectral measure), 
independent of the Poisson process.  Then
\[
C_\alpha^{1/\alpha} \Lambda(\mathbb{S}^{n-1})^{1/\alpha} \sum_{i=1}^\infty \Gamma_i^{-1/\alpha} \mathbf{W}_i
\] 
converges almost surely to a random vector with distribution \( S_\alpha(\Lambda) \).
\end{lemma}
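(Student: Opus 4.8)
The plan is to prove the two assertions of Lemma~\ref{lemma: multivariate converging sum} in turn: first that the series \( S \coloneqq \sum_{i=1}^\infty \Gamma_i^{-1/\alpha}\mathbf{W}_i \) converges almost surely, and then that, after the stated normalization, its law is \( S_\alpha(\Lambda) \). Throughout I write \( c \coloneqq (C_\alpha \Lambda(\mathbb{S}^{n-1}))^{1/\alpha} \) for the normalizing constant, \( S_N \coloneqq \sum_{i=1}^N \Gamma_i^{-1/\alpha}\mathbf{W}_i \) for the partial sums, and \( \mathcal{G} \coloneqq \sigma((\Gamma_i)_{i\ge 1}) \).

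For the almost sure convergence I would condition on \( \mathcal{G} \) and argue coordinate by coordinate. Given \( \mathcal{G} \), the vectors \( (\mathbf{W}_i) \) are still i.i.d.\ with law \( \bar\Lambda \), so for each coordinate \( j \) the summands \( \Gamma_i^{-1/\alpha}W_i^{(j)} \) are conditionally independent; they are mean zero because \( \bar\Lambda \), like \( \Lambda \), is symmetric, and their conditional variances are at most \( \Gamma_i^{-2/\alpha} \) since \( |W_i^{(j)}| \le \|\mathbf{W}_i\|_2 = 1 \). Because \( \Gamma_i/i \to 1 \) almost surely and \( 2/\alpha > 1 \) for \( \alpha \in (0,2) \), we have \( \sum_i \Gamma_i^{-2/\alpha} < \infty \) almost surely; on this event Kolmogorov's two-series theorem gives conditional almost sure convergence of each coordinate series, and integrating over \( \mathcal{G} \) yields unconditional almost sure convergence of \( S \). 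The symmetry of \( \Lambda \) is used essentially here: for \( \alpha \in [1,2) \) the series is only conditionally convergent and would diverge without the mean-zero cancellation.

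To identify the limit I would compute \( \lim_{N\to\infty}\E[\exp(i\bftheta\cdot S_N)] \); since \( S_N \to S \) almost surely, this is the characteristic function of \( S \). Conditioning on \( \Gamma_{N+1} = T \), the points \( \Gamma_1,\dots,\Gamma_N \) are distributed as \( N \) i.i.d.\ \( \unif[0,T] \) variables, so with \( \mathbf{W}_i \sim \bar\Lambda \) independent,
\[
\E\bigl[e^{i\bftheta\cdot S_N}\mid\Gamma_{N+1}=T\bigr]=\Bigl(1+\tfrac1T\,g(T)\Bigr)^N,\qquad g(T)\coloneqq\int_0^T\!\!\int_{\mathbb{S}^{n-1}}\bigl(e^{iu^{-1/\alpha}\bftheta\cdot\mathbf{w}}-1\bigr)\,d\bar\Lambda(\mathbf{w})\,du.
\]
Using symmetry to replace the exponential by a cosine and the substitution \( \rho = u^{-1/\alpha} \), one checks that \( g(T)\to g(\infty)=-\alpha K_\alpha\int_{\mathbb{S}^{n-1}}|\bftheta\cdot\mathbf{w}|^\alpha\,d\bar\Lambda(\mathbf{w}) \), where \( K_\alpha \coloneqq \int_0^\infty(1-\cos v)v^{-1-\alpha}\,dv < \infty \); the finiteness of \( g(\infty) \) is exactly where \( \alpha < 2 \) enters (quadratic control of the integrand at \( u = \infty \)). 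Since \( \Gamma_{N+1}/N\to 1 \) almost surely, \( (1+g(T)/T)^N\to e^{g(\infty)} \), and taking expectations over \( T \) gives \( \E[e^{i\bftheta\cdot S}]=\exp(-\alpha K_\alpha\int_{\mathbb{S}^{n-1}}|\bftheta\cdot\mathbf{w}|^\alpha\,d\bar\Lambda(\mathbf{w})) \). Replacing \( \bftheta \) by \( c\bftheta \) and using \( c^\alpha = C_\alpha\Lambda(\mathbb{S}^{n-1}) \) together with \( \bar\Lambda = \Lambda/\Lambda(\mathbb{S}^{n-1}) \) turns the characteristic function of \( cS \) into \( \exp(-\alpha K_\alpha C_\alpha\int_{\mathbb{S}^{n-1}}|\bftheta\cdot\mathbf{w}|^\alpha\,d\Lambda(\mathbf{w})) \), which matches~\eqref{eq: multivariate stable cf} provided \( \alpha K_\alpha C_\alpha = 1 \).

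The main obstacle is twofold. First, the interchange of limit and expectation in the characteristic-function computation must be justified with some care, since \( T = \Gamma_{N+1} \) is random and the limit \( (1+g(T)/T)^N\to e^{g(\infty)} \) has to be controlled well enough to survive integration over \( T \); the symmetry of \( \bar\Lambda \) is indispensable, as it is what renders \( g(\infty) \) finite despite the series being only conditionally convergent when \( \alpha \ge 1 \). Second, one must verify the calibration \( \alpha K_\alpha C_\alpha = 1 \). This is purely one-dimensional: integrating \( K_\alpha \) by parts gives \( \alpha K_\alpha = \int_0^\infty v^{-\alpha}\sin v\,dv \), whose reciprocal is precisely the constant \( C_\alpha \) fixed by the tail normalization~\eqref{eq: basictail}, so the identity holds by the very definition of \( C_\alpha \). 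Equivalently, the whole calibration can be read off from the scalar case \( n = 1 \), \( \Lambda = \tfrac12(\delta_{1}+\delta_{-1}) \), where the statement reduces to the classical LePage representation of a one-dimensional symmetric \( \alpha \)-stable variable.
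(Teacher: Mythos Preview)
The paper does not prove this lemma itself; it is quoted verbatim as Theorem~3.10.1 of~\cite{st1994} and the reader is referred there for a proof. Your sketch is essentially the standard LePage--Woodroofe--Zinn argument that appears in that reference: almost sure convergence via conditioning on the Poisson arrivals and using symmetry plus square-summability of \( (\Gamma_i^{-1/\alpha}) \), and identification of the law via the order-statistic representation of \( (\Gamma_1,\dots,\Gamma_N) \) given \( \Gamma_{N+1} \). The argument is correct, including the two points you single out---the dominated-convergence step for \( \E[(1+g(\Gamma_{N+1})/\Gamma_{N+1})^N] \) and the calibration \( \alpha K_\alpha C_\alpha = 1 \), both of which are handled exactly as you indicate.
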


We now give a proof of Theorem~\ref{theorem: later tails} using 
Lemmas~\ref{lemma: moments of tails}~and~\ref{lemma: multivariate converging sum}.

\begin{proof}[Proof of Theorem~\ref{theorem: later tails}]
Let  \( C_\alpha\),  \( (\Gamma_i) \) and \( ( \mathbf{W}_i) \) be as in 
Lemma~\ref{lemma: multivariate converging sum}.
Define 
\[
X = \big(X_1,X_2, \ldots, X_n \bigr) \coloneqq C_\alpha^{1/\alpha} \Lambda \bigl(\mathbb{S}^{n-1}\bigr)^{1/\alpha} \sum_{i=1}^\infty \Gamma_i^{-1/\alpha} \mathbf{W}_i .
\]
Then Lemma~\ref{lemma: multivariate converging sum} implies that \( X \) has distribution 
\(S_\alpha(\Lambda) \). For \( j \in [n] \) and \( i \geq 1 \), let \( \mathbf{W}_i(j) \) denote the \( j \)th component of \( \mathbb{W}_i \).  By Markov's inequality, for any \( j = 1,2, \ldots, n \)  and all 
\( h > 0 \) and \( \varepsilon > 0 \) we have that
\begin{align*}
&P \biggl[ \Bigl( C_\alpha \Lambda\bigl( \mathbb{S}^{n-1}\bigr)\Bigr)^{1/\alpha} \, \Bigl| \sum_{i = {k+1}}^\infty  \Gamma_i^{-1/\alpha} \mathbf{W}_i(j) \Bigr| > h \biggr]
\\&\qquad \leq \frac{\E \left[\Bigl( C_\alpha \Lambda\bigl( \mathbb{S}^{n-1}\bigr)\Bigr)^{(k\alpha+\varepsilon)/\alpha} \, \Bigl| \sum_{i = k+1}^\infty  \Gamma_i^{-1/\alpha} \mathbf{W}_i(j) \; \Bigr|^{k \alpha + \varepsilon} \right]}{h^{k\alpha + \varepsilon}} .
\end{align*}
By picking \( \varepsilon\) sufficiently small and applying Lemma~\ref{lemma: moments of tails} using $k+1$
(noting that by symmetry, $W_i(j)$ has the same distribution as $\epsilon_i |W_i(j)|$ with the two
factors independent), it follows that
\[
P \biggl[\Bigl( C_\alpha \Lambda\bigl( \mathbb{S}^{n-1}\bigr)\Bigr)^{1/\alpha}  \Bigl| \sum_{i = k+1}^\infty  \Gamma_i^{-1/\alpha} \mathbf{W}_i(j)\Bigr| > h \biggr] \leq o\bigl(h^{-k\alpha}\bigr)
\]
and hence
\[
P \biggl[\Bigl( C_\alpha \Lambda\bigl( \mathbb{S}^{n-1}\bigr)\Bigr)^{1/\alpha}  \, \Bigl\| \sum_{i = k+1}^\infty   \Gamma_i^{-1/\alpha} \mathbf{W}_i\Bigr\|_\infty > h \biggr] \leq o\bigl(h^{-k \alpha}\bigr).
\]
This implies in particular that for any \( \varepsilon' > 0 \)
\[
P \biggl[ \Bigl( C_\alpha \Lambda\bigl( \mathbb{S}^{n-1}\bigr)\Bigr)^{1/\alpha}  \, \Bigl\| \sum_{i = k+1}^\infty   \Gamma_i^{-1/\alpha} \mathbf{W}_i\Bigr\|_\infty > \varepsilon' h \biggr] \leq o\bigl(h^{-k \alpha}\bigr).
\]

Now for any $\delta >0$, let \( E_{\delta,-} \coloneqq 
\bigl\{ \mathbf{x} \in E \colon d(x,\partial E)> \delta \bigr\} \). Setting
$\delta\coloneqq \sqrt{n} \varepsilon'$, we then have
\begin{equation}\label{eq: h prime inequality I}
\begin{split}
 & P\bigl( (X_1,X_2,\ldots, X_n) \in hE  \bigr) 
\\&\qquad=
P\biggl[ (X_1,X_2,\ldots, X_n) \in hE, \,
\Bigl( C_\alpha \Lambda\bigl( \mathbb{S}^{n-1}\bigr)\Bigr)^{1/\alpha}  \, \Bigl\| \sum_{i = k+1}^\infty   \Gamma_i^{-1/\alpha} \mathbf{W}_i\Bigr\|_\infty < \varepsilon' h\biggr] + o(h^{-k\alpha})  
\\& \qquad \geq 
P\Biggl[ \Bigl( C_\alpha \Lambda\bigl( \mathbb{S}^{n-1}\bigr)\Bigr)^{1/\alpha} \Bigl(   \Gamma_1^{-1/\alpha} \mathbf{W}_1+ \ldots +    \Gamma_k^{-1/\alpha} \mathbf{W}_k   \Bigr) \in   hE_{\delta,-}, \,
\\&\qquad\qquad \qquad
\Bigl( C_\alpha \Lambda\bigl( \mathbb{S}^{n-1}\bigr)\Bigr)^{1/\alpha}  \,  \Bigl\| \sum_{i = k+1}^\infty   \Gamma_i^{-1/\alpha} \mathbf{W}_i\Bigr\|_\infty < \varepsilon' h
\Biggr] + o(h^{-k\alpha})
\\&\qquad  =P\Biggl[ \Bigl( C_\alpha \Lambda\bigl( \mathbb{S}^{n-1}\bigr)\Bigr)^{1/\alpha} \left(   \Gamma_1^{-1/\alpha} \mathbf{W}_1+ \ldots +    \Gamma_k^{-1/\alpha} \mathbf{W}_k   \right) \in hE_{\delta,-}\Biggr]  + o(h^{-k\alpha}).
\end{split}
\end{equation}
Similarly, we have that
\begin{equation}\label{eq: h prime inequality II}
\begin{split} 
&
P\bigl( (X_1,X_2,\ldots, X_n) \in hE\bigr)  
\\& \qquad=
P\biggl[ (X_1,X_2,\ldots, X_n) \in hE,\,
\Bigl( C_\alpha \Lambda\bigl( \mathbb{S}^{n-1}\bigr)\Bigr)^{1/\alpha} \,  \Bigl\| \sum_{i = k+1}^\infty   \Gamma_i^{-1/\alpha} \mathbf{W}_i\Bigr\|_\infty < \varepsilon' h\biggr] + o(h^{-k\alpha}) 
\\&\qquad \leq
P\Biggl[
 \Bigl( C_\alpha \Lambda\bigl( \mathbb{S}^{n-1}\bigr)\Bigr)^{1/\alpha}  \left(   \Gamma_1^{-1/\alpha} \mathbf{W}_1+ \ldots +    \Gamma_k^{-1/\alpha} \mathbf{W}_k   \right) \in hE_{\delta,+},
 \\& \qquad\qquad\qquad 
\Bigl( C_\alpha \Lambda\bigl( \mathbb{S}^{n-1}\bigr)\Bigr)^{1/\alpha}  \, \Bigl\| \sum_{i = k+1}^\infty   \Gamma_i^{-1/\alpha} \mathbf{W}_i\Bigr\|_\infty < \varepsilon' h
\Biggr] + o(h^{-k\alpha})  
\\&\qquad\qquad \le P\biggl[ \Bigl( C_\alpha \Lambda\bigl( \mathbb{S}^{n-1}\bigr)\Bigr)^{1/\alpha} \left(   \Gamma_1^{-1/\alpha} \mathbf{W}_1+ \ldots +    \Gamma_k^{-1/\alpha} \mathbf{W}_k   \right) \in   h E_{\delta,+}\biggr] + o(h^{-k\alpha}).
\end{split}
\end{equation}
To be able to simplify these expressions, first recall that if 
\( (\Gamma_1, \Gamma_2, \ldots, \Gamma_{k+1} ) \) 
are the first \( k +1 \) arrivals of a mean one Poisson process and 
\( U_1,U_2, \ldots, U_k \sim \unif(0,1) \) are independent, 
\[
\bigl\{ \Gamma_1/\Gamma_{k+1} , \ldots, \Gamma_{k}/\Gamma_{k+1} \mid \Gamma_{k+1} \bigr\} 
\overset{d}{=} \bigl\{ U_1, \ldots, U_{k} \bigr\}.
\]
Using this and now letting \( U_1,U_2, \ldots, U_k \) be i.i.d.\ uniforms defined on the same 
probability space as everything else but independent of them,
we see that for \( E_{\delta,\cdot} = E_{\delta,+} \) or \( E_{\delta,\cdot} = E_{\delta,-} \), we have that
\begin{align*}
& P\biggl[ \Bigl( C_\alpha \Lambda\bigl( \mathbb{S}^{n-1}\bigr)\Bigr)^{1/\alpha}  \sum_{i=1}^k    \Gamma_i^{-1/\alpha} \mathbf{W}_i  \in  h E_{\delta,\cdot}\biggr] 
\\& \qquad =  P\biggl[ \Bigl( C_\alpha \Lambda\bigl( \mathbb{S}^{n-1}\bigr)\Bigr)^{1/\alpha} \Gamma_{k+1}^{-1/\alpha} \sum_{i=1}^k    U_i^{-1/\alpha} \mathbf{W}_i  \in h E_{\delta,\cdot} \biggr] 
\\& \qquad =  \int_0^\infty \frac{x^k e^{-x}}{k!} \int_0^1 \cdots \int_0^1 \bar \Lambda^k \Biggl( \biggl\{ (w_i)_{i=1}^k \colon    x^{-1/\alpha} \sum_{i=1}^k \left( \frac{C_\alpha \Lambda(\mathbb{S}^{n-1})}{u_i} \right)^{1/\alpha}   w_i \in   h E_{\delta,\cdot}  \biggr\} \Biggr)  \,  \prod_{i=1}^k du_i  \, dx.
\end{align*} 
 If, for each fixed $x$, we make the change of variables 
 \[ s_i = x^{-1/\alpha} \left( \frac{C_\alpha \Lambda(\mathbb{S}^{n-1})}{h^\alpha u_i} \right)^{1/\alpha},
 \]
then this simplifies to
\begin{align*}
& \int_0^\infty \frac{ e^{-x}}{k!} \Bigl( C_\alpha \Lambda \bigl(\mathbb{S}^{n-1} \bigr) h^{-\alpha} \Bigr)^k 
\int_0^\infty \cdots \int_0^\infty
  \bar \Lambda^k \Biggl( \biggl\{ (w_i)_{i=1}^k  \colon      \sum_{i=1}^k s_i  w_i \in  E_{\delta,\cdot} \biggr\} \Biggr)  
  \\&\qquad\qquad \cdot  I\biggl[ \min_i s_i > \Bigl( \frac{C_\alpha \Lambda(\mathbb{S}^{n-1})}{h^\alpha x}\Bigr)^{1/\alpha} \biggr] \, \prod_{i=1}^k\alpha s_i^{-(1+\alpha)}\, ds_i\, dx
\\& \qquad = \frac{C_\alpha^k h^{-\alpha k}}{k!} 
\int_0^\infty \cdots \int_0^\infty
  \Lambda^k \Biggl( \biggl\{ (w_i)_{i=1}^k  \colon      \sum_{i=1}^k s_i  w_i \in  E_{\delta,\cdot}\biggr\} \Biggr)  \, 
  \Biggl[ \int_{\frac{C_\alpha \Lambda (\mathbb{S}^{n-1})}{h^\alpha \min_i s_i^\alpha}}^\infty   e^{-x}
  \, dx \Biggr] \,
\prod_{i=1}^k\alpha s_i^{-(1+\alpha)}\, ds_i .
\end{align*}
Note that the integral above is increasing in \( h \).
Combining the previous equation with~\eqref{eq: h prime inequality I} and~\eqref{eq: h prime inequality II} and applying the monotone convergence theorem, it follows that for any \( \delta > 0 \),
\begin{align}
&\label{eq: I1}
  \frac{ C_\alpha^k }{k!} 
\int_0^\infty \cdots \int_0^\infty
  \Lambda^k \Biggl( \biggl\{ (w_i)_{i=1}^k  \colon      \sum_{i=1}^k s_i  w_i \in E_{\delta,-}\biggr\} \Biggr)  \, 
\prod_{i=1}^k\alpha s_i^{-(1+\alpha)}\, ds_i
\\&\nonumber\qquad \leq
\liminf_{h \to \infty}  h^{k \alpha } P\bigl( (X_1,X_2,\ldots, X_n) \in h  E \bigr) 
\\&\qquad  \nonumber\leq
\limsup_{h \to \infty}  h^{k \alpha } P\bigl( (X_1,X_2,\ldots, X_n) \in h  E \bigr) 
\\&\nonumber \qquad \leq
  \frac{ C_\alpha^k }{k!} 
\int_0^\infty \cdots \int_0^\infty
  \Lambda^k \Biggl( \biggl\{ (w_i)_{i=1}^k  \colon      \sum_{i=1}^k s_i  w_i \in E_{\delta,+}\biggr\} \Biggr)  \, 
\prod_{i=1}^k\alpha s_i^{-(1+\alpha)}\, ds_i.
\end{align}
Noting that the integrand in~\eqref{eq: I1} is monotone in \( \delta \) and converges pointwise to the integrand in  \(L(E^o,k,\alpha)\), the desired conclusion follows  by letting \( \delta \to 0 \) and  applying the monotone convergence theorem. 
\end{proof}

 \section{Examples}\label{section: examples}

We will now apply Theorem~\ref{theorem: later tails} to a few examples.

\begin{example}\label{example: independent marginals}
Let \( \alpha \in (0,2) \) and let \( X_1 \) and \( X_2 \) be i.i.d.\ with 
\( X_1 \sim S_\alpha  \) and let \( X=(X_1,X_2) \). The corresponding spectral
measure \( \Lambda \) has four point masses each of weight \( 2^{-1} \) at 
\( (1,0) \), \( (0,1) \), \( (-1,0) \) and \( (0,-1) \).  With this example, we 
will consider three different sets $E$ which will be our three different cases.

\paragraph{Case (i)} Let \( E = \bigl\{ \mathbf{x} \in \mathbf{R}^2 \colon {x}_1,\, {x}_2 > 1 \bigr\} \). Then it is easy to see that $L(E^o,2,\alpha)=\lim_{\delta \to 0} L(E_{\delta,+},2,\alpha)$ and furthermore this common value is 
\begin{equation*}
C_\alpha^2 / 2 \cdot 2 \int_0^\infty \int_0^\infty (2^{-1} )^2 \cdot I \bigl(s_1,s_2 > 1\bigr) 
 \cdot \alpha s_1^{-(1 + \alpha)} \alpha s_2^{-(1+\alpha)} \, ds_1 \, ds_2
= C_\alpha^2 \cdot 2^{-2}.
\end{equation*}
Applying Theorem~\ref{theorem: later tails} with \( k = 2 \), we obtain
\begin{equation*}
\lim_{h \to \infty} h^{2\alpha} P \bigl(X_1,X_2 > h \bigr) = C_\alpha^2 \cdot 2^{-2}
\end{equation*}
which is of course consistent with what independence yields.

\begin{figure}[ht]

\centering
\begin{tikzpicture}[scale=0.8]
\begin{axis}[axis x line=middle,axis y line=middle, xmin=-3,xmax=3, ymin=-3,ymax=3, xlabel=$\mathbf{x}_1$, ylabel=$\mathbf{x}_2$, x label style={at={(axis description cs:1.05, 0.54)},anchor=north}, y label style={at={(axis description cs:0.5, 1.08)},anchor=north}, xtick={0}, ytick={0}]
\fill[fill = gray!30, opacity=.4] (axis cs: 1,1) rectangle (axis cs: 2.9,2.9);
\draw[dashed] (axis cs: 1,2.9) -- (axis cs: 1,1) -- (axis cs: 2.9,1);

\draw[green,domain=0:360] plot (axis cs: {0.5*cos(\x)}, {0.5*sin(\x)});

\fill[red] (axis cs: 0.5,0) circle (0.6mm); 
\fill[red] (axis cs: -0.5,0) circle (0.6mm); 
\fill[red] (axis cs: 0,0.5) circle (0.6mm); 
\fill[red] (axis cs: 0,-0.5) circle (0.6mm); 
\end{axis}
\end{tikzpicture} 

\caption{The figure above shows the set \( 2E \) (gray area) considered in Case (i) of
Example~\ref{example: independent marginals}  together with the four points (in red) 
at which \( \Lambda \) is supported.} 
\end{figure}

\paragraph{Case (ii)}
Let    \( A \subseteq \mathbb{S}^{1} \cap (\varepsilon,\infty)^2 \) 
for some \(\varepsilon>0 \) and define
\[
 C_A \coloneqq \bigl\{ \mathbf{x} \in \mathbb{R}^2 \colon \| \mathbf{x} \|_2 > 1 \text{ and } 
\mathbf{x}/ \|\mathbf{x}\|_2 \in A \bigr\}
\]
be the cone above $A$. Then we have the following.
\begin{proposition} \label{prop: case2example}
Let \( X \), \( A \) and \( C_A \) be as above, and assume that in addition to the above, the boundary of $A$ has zero (one-dimensional) measure. Then
\[
\lim_{h \to \infty} h^{2 \alpha} P \bigl(X \in hC_A\bigr)=
\frac{ C_\alpha^2  }{8}
\int_A \alpha (\cos \theta \sin \theta)^{-(1 + \alpha)} \, d\theta
\]
\end{proposition}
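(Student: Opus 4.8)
The plan is to apply Theorem~\ref{theorem: later tails} directly to the cone $C_A$ with $k=2$, using the explicit four-point spectral measure $\Lambda$ having mass $1/2$ at each of $(1,0),(0,1),(-1,0),(0,-1)$. First I would verify that $C_A$ satisfies the hypotheses that make $L(C_A^o,2,\alpha)=\lim_{\delta\to0}L((C_A)_{\delta,+},2,\alpha)$, so that Theorem~\ref{theorem: later tails} yields an exact limit rather than merely bounds. Since $A\subseteq \mathbb{S}^1\cap(\varepsilon,\infty)^2$ lies strictly in the open first quadrant, any point of $C_A$ has both coordinates positive and bounded away from the axes, so to write such a point as $s_1\mathbf{x}_1+s_2\mathbf{x}_2$ with $\mathbf{x}_1,\mathbf{x}_2\in\support(\Lambda)$ genuinely requires two distinct axis directions (a single axis vector cannot reach the interior of the quadrant, and using $\pm$ copies of the same axis vector stays on that axis). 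This places us in the ``generic'' situation of Remark~\ref{remark: 1}(iii): $C_A^o\cap S_\Lambda(2)\neq\emptyset$, the coefficients $|s_i|$ are bounded below whenever the sum lands in $C_A^o$, and the boundary of $C_A$ has zero Lebesgue measure (using $\Lambda(\partial A)=0$ and the assumption that $\partial A$ has zero one-dimensional measure), so the limit exists and equals $L(C_A,2,\alpha)\in(0,\infty)$.

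Next I would evaluate $L(C_A,2,\alpha)$ explicitly. Using the formula~\eqref{eq: the limiting expression 0} with $k=2$, the integral over $\Lambda^2$ reduces to a finite sum over ordered pairs of support points. The only pairs $(\mathbf{x}_1,\mathbf{x}_2)$ for which $s_1\mathbf{x}_1+s_2\mathbf{x}_2$ with $s_1,s_2>0$ can reach the open first quadrant are those where $\{\mathbf{x}_1,\mathbf{x}_2\}=\{(1,0),(0,1)\}$, since any use of $(-1,0)$ or $(0,-1)$ with a positive coefficient produces a negative coordinate and so cannot land in $A$. Thus the relevant contribution comes from $s_1(1,0)+s_2(0,1)=(s_1,s_2)$ together with the ordering symmetry, and each of the two support points $(1,0),(0,1)$ carries mass $\Lambda=1/2$.

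Then I would carry out the change to polar coordinates. Writing $(s_1,s_2)=(r\cos\theta,r\sin\theta)$ with $r>0$ and $\theta\in(0,\pi/2)$, the condition $(s_1,s_2)\in C_A$ becomes $r>1$ and $\theta$ such that $(\cos\theta,\sin\theta)\in A$. The integrand $\alpha s_1^{-(1+\alpha)}\alpha s_2^{-(1+\alpha)}\,ds_1\,ds_2$ transforms, and after integrating out the radial variable $r$ from $1$ to $\infty$ the factor $(\cos\theta\sin\theta)^{-(1+\alpha)}$ emerges, leaving a single angular integral $\int_A \alpha(\cos\theta\sin\theta)^{-(1+\alpha)}\,d\theta$ up to the radial constant. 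Combining the prefactor $C_\alpha^2/k!=C_\alpha^2/2$, the two mass factors $(1/2)(1/2)=1/4$, and the radial integration constant should produce the stated coefficient $C_\alpha^2/8$.

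The main obstacle I anticipate is the bookkeeping of constants and combinatorial multiplicity: one must correctly account for the factor $1/k!=1/2$ from~\eqref{eq: the limiting expression 0}, the two mass weights of $1/2$, the radial integration of $r^{-(1+2\alpha)}$ over $(1,\infty)$, and the Jacobian of the polar substitution, and ensure these combine to $C_\alpha^2/8$ exactly. A secondary subtlety is confirming that reflected support points contribute nothing: since $A$ lies strictly in the open positive quadrant, positive linear combinations involving $(-1,0)$ or $(0,-1)$ never enter $A$, and the $1/k!$ together with the symmetric counting in~\eqref{eq: the limiting expression 0} correctly handles the two orderings of the single relevant unordered pair. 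Once the vanishing of all other pairs is justified, the computation is a routine one-dimensional integral.
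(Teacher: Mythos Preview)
Your proposal is correct and follows essentially the same route as the paper: apply Theorem~\ref{theorem: later tails} with $k=2$, observe that among all ordered pairs of support points only those with $\{\mathbf{x}_1,\mathbf{x}_2\}=\{(1,0),(0,1)\}$ contribute, and evaluate the resulting integral over $C_A$ in polar coordinates. The only cosmetic difference is that the paper verifies $L((C_A)^o,2,\alpha)=\lim_{\delta\to 0}L((C_A)_{\delta,+},2,\alpha)$ by explicitly relating $(C_A)^o$ to $C_{A^o}$ and $\overline{C_A}$ to $C_{\bar A}$, whereas you package this step via Remark~\ref{remark: 1}(iii); the underlying argument is the same.
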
 

\begin{proof}
We begin with the following computation which is valid for any set $A$
contained in  \(  \mathbb{S}^{1} \cap (\varepsilon,\infty)^2 \).
\begin{align}
& 
\frac{C_\alpha^2  }{2}
\int_0^\infty
\int_{0}^\infty
\Lambda^2 \bigl( \mathbf{x}_1,  \mathbf{x}_2 \in \mathbb{S}^{1} \colon     s_1\mathbf{x}_1  + s_2\mathbf{x}_2 \in C_A  \bigr) 
\cdot    \prod_{i=1}^2 \alpha  s_i^{-(1+\alpha)}\, ds_i \nonumber
\\&\qquad =
C_\alpha^2   
\int_0^\infty
\int_{0}^\infty
2^{-2} \cdot I\bigl(    ( s_1,s_2) \in C_A  \bigr) 
\cdot    \prod_{i=1}^2 \alpha  s_i^{-(1+\alpha)}\, ds_i\nonumber
\\&\qquad =
\frac{C_\alpha^2  }{4}
\int_{(s_1,s_2) \in C_A}    \alpha^2   s_1^{-(1+\alpha)}    s_2^{-(1+\alpha)}\, ds_1 \, ds_2 \label{eq: hidden variation}
\\&\qquad =
\frac{C_\alpha^2  }{4}
\int_A \int_1^\infty   \alpha^2   (r \cos \theta)^{-(1+\alpha)}    (r \sin \theta)^{-(1+\alpha)}\, r\,  dr \, d\theta\nonumber
\\&\qquad =
\frac{ C_\alpha^2  }{8}
\int_A \alpha (\cos \theta \sin \theta)^{-(1 + \alpha)} \, d\theta .\nonumber
\end{align}

For any set $U$, letting $U^o$ be the interior of $U$, one easily checks that
$$
(C_A)^o=C_{A^o} \mbox{ and }
C_{\bar A}\subset {\bar C_A}\subset C_{\bar A}\cup \mathbb{S}^{1} 
$$
keeping in mind that the interiors and closures are with respect to different spaces,
in one case \(\mathbb{R}^2 \) and in one case \(\mathbb{S}^{1} \).
Therefore the above computation shows that 
$$
L(C_A^o,2,\alpha)=
\frac{ C_\alpha^2  }{8}
\int_{A^o} \alpha (\cos \theta \sin \theta)^{-(1 + \alpha)} \, d\theta 
$$
and 
$$
L(\overline{C_A},2,\alpha)=
\frac{ C_\alpha^2  }{8}
\int_{\bar A} \alpha (\cos \theta \sin \theta)^{-(1 + \alpha)} \, d\theta 
$$
where for the latter equation, we also used the fact that 
the \(\mathbb{S}^{1}\) piece adds nothing to the relevant integral.

Now, using the fact the boundary of $A$ has measure zero, we conclude that
$L((C_A)^o,2,\alpha)=L(\overline{C_A},2,\alpha)$. Since $\varepsilon$ is fixed, it is easy
to see that $L((C_A)_{\delta,+},2,\alpha)$ is finite for sufficiently small
$\delta$ allowing us to conclude that
$L((C_A)^o,2,\alpha)=\lim_{\delta \to 0}L((C_A)_{\delta,+},2,\alpha)$.
Theorem~\ref{theorem: later tails} with \( k = 2 \) now yields the result.
\end{proof}

\begin{remark}
This improves on~\eqref{eq: cone result} in this case since it yields the correct decay
rate and demonstrates the hidden regular variation behavior.
The former result would only give
$\lim_{h \to \infty} h^{\alpha} P(X \in C_A)=0$.
Not surprisingly, when $A$ is as large as possible with $\varepsilon$ fixed,
the integral tends to infinity as $\varepsilon$ goes to 0; this is because we are getting
closer to the support of the spectral measure.
\end{remark}

\begin{figure}[ht]

\centering
\begin{tikzpicture}[scale=0.8]
\begin{axis}[axis x line=middle,axis y line=middle, xmin=-3,xmax=3, ymin=-3,ymax=3, xlabel=$\mathbf{x}_1$, ylabel=$\mathbf{x}_2$, x label style={at={(axis description cs:1.05, 0.54)},anchor=north}, y label style={at={(axis description cs:0.5, 1.08)},anchor=north}, xtick={0}, ytick={0}]

\draw[dashed,domain=35:70] plot (axis cs: {cos(\x)}, {sin(\x)});
\draw[dashed,domain=1:4] plot (axis cs: {0.82* \x}, {0.57* \x}); 
\draw[dashed,domain=1:4] plot (axis cs: {0.34* \x}, {0.94* \x}); 
   
\fill[fill = gray!30, opacity=.4] (axis cs: 3.28, 2.29) --   (axis cs: 0.82, 0.57)  --   (axis cs: 0.7, 0.74)  -- (axis cs: 0.34, 0.94)  -- (axis cs: 1.37, 3.76)  -- (axis cs: 3.28, 3.76) ;

\draw[green,domain=0:360] plot (axis cs: {0.5*cos(\x)}, {0.5*sin(\x)});

\fill[red] (axis cs: 0.5,0) circle (0.6mm); 
\fill[red] (axis cs: -0.5,0) circle (0.6mm); 
\fill[red] (axis cs: 0,0.5) circle (0.6mm); 
\fill[red] (axis cs: 0,-0.5) circle (0.6mm); 
\end{axis}
\end{tikzpicture} 

\caption{The figure above shows the set \( 2C_A \) (gray) considered in Case (ii)
of Example~\ref{example: independent marginals}, together with the four 
points (in red) at which \( \Lambda \) has support.}
\end{figure}
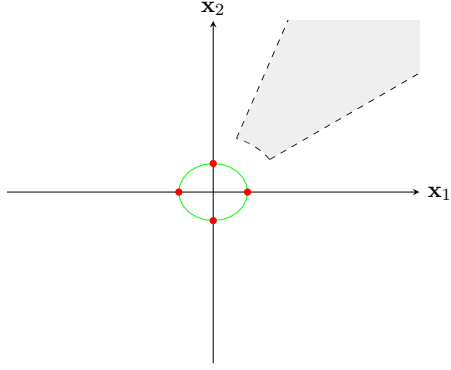

\paragraph{Case (iii)} This example, while fairly simple, has three different values arising in~\eqref{eq:theorem1} when $k=1$ and, in particular, Theorem~\ref{theorem: later tails} yields nonmatching upper and lower bounds. 
We let 
\[ 
E = \{ \mathbf{x} \in \mathbb{R}^2 \colon {x}_1 >1, {x}_2<0 \}.
\]
It is easy to check that for any \( \alpha \in (0,2) \), we have that $L(E^o,1, \alpha)=0$, $L(\bar E,1, \alpha)=\lim_{\delta \to 0} L(E_{\delta,+},1, \alpha)=C_\alpha/2$ while using the independence of the components, it is immediate that the middle terms in~\eqref{eq:theorem1} when $k=1$ are $C_\alpha/4$. 


%
\begin{figure}[ht]

\centering
\begin{tikzpicture}[scale=0.8]
\begin{axis}[axis x line=middle,axis y line=middle, xmin=-3,xmax=3, ymin=-3,ymax=3, xlabel=$\mathbf{x}_1$, ylabel=$\mathbf{x}_2$, x label style={at={(axis description cs:1.05, 0.54)},anchor=north}, y label style={at={(axis description cs:0.5, 1.08)},anchor=north}, xtick={0}, ytick={0}]
\fill[fill = gray!30, opacity=.4] (axis cs: 1,-4) rectangle (axis cs: 2.9,0);
\draw[dashed] (axis cs: 1,-3) -- (axis cs: 1,0) -- (axis cs: 2.9,0);
\draw[red] (axis cs: -2.9,-0) -- (axis cs: 2.9,0);

\draw[green,domain=0:360] plot (axis cs: {0.5*cos(\x)}, {0.5*sin(\x)});

\fill[red] (axis cs: 0.5,0) circle (0.6mm); 
\fill[red] (axis cs: -0.5,0) circle (0.6mm); 
\fill[red] (axis cs: 0,0.5) circle (0.6mm); 
\fill[red] (axis cs: 0,-0.5) circle (0.6mm); 
\end{axis}
\end{tikzpicture} 

\caption{The figure above shows the set \( 2E \) (gray) considered in Case (iii)
of Example~\ref{example: independent marginals} together with the four points (in red) at which 
\( \Lambda \) has support.}
\end{figure}


\begin{figure}[ht]

\centering
\begin{tikzpicture}[scale=0.8]
\begin{axis}[axis x line=middle,axis y line=middle, xmin=-3,xmax=3, ymin=-3,ymax=3, xlabel=$\mathbf{x}_1$, ylabel=$\mathbf{x}_2$, x label style={at={(axis description cs:1.05, 0.54)},anchor=north}, y label style={at={(axis description cs:0.5, 1.08)},anchor=north}, xtick={0}, ytick={0}]

\fill[fill = gray!30, opacity=.4] (axis cs: 1,1) rectangle (axis cs: 2.9,-3);
\draw[dashed] (axis cs: 1,-3) -- (axis cs: 1,1) -- (axis cs: 2.9,1);

\draw[red] (axis cs: 0,-0) -- (axis cs: 2.9,2.9);

\draw[green,domain=0:360] plot (axis cs: {0.5*cos(\x)}, {0.5*sin(\x)});

\fill[red] (axis cs: 0.36,0.36) circle (0.6mm); 
\fill[red] (axis cs: -0.36,-0.36) circle (0.6mm); 
\fill[red] (axis cs: 0,-0.5) circle (0.6mm); 
\fill[red] (axis cs: 0,0.5) circle (0.6mm); 
\end{axis}
\end{tikzpicture} 

\caption{The figure above shows the set \( 2E \) (gray) considered 
in Example~\ref{example: from paper} together with the four points (in red) at 
which \( \Lambda \) has support.}
\end{figure}
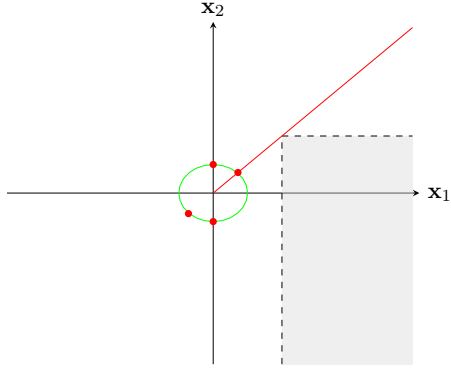

\end{example}

Our next example illustrates a number of interesting phenomena which we summarize in  
Proposition~\ref{prop: cool.example2}
after giving the example. This provides an example where (i) the decay rate has 
three possible behaviors
depending on $\alpha$, (ii) $L(E^o,k,\alpha)\neq \lim_{\delta \to 0} L(E_{\delta,+},k,\alpha)$
and (iii) where the tail behavior can drastically change due to a modification in the set $E$ 
in an arbitrarily small neighborhood of one point, namely $(1,1)$. 
It is also a ``baby version'' of the example following it which will be crucially
used in \cite{PS.threshold}. 

\begin{example}\label{example: from paper}
Let \( \alpha \in (0,2) \) and \( S_1 \) and \( S_2 \) be i.i.d.\ with distribution {\( S_\alpha \)} and let
\[
X = (1,1) S_1 - (0,1)S_2.
\]
Then \( X \) is a symmetric \( \alpha \)-stable random vector and
the spectral measure \( \Lambda \) of 
\( X \)  has mass \( 2^{\alpha/2}/2\) at \( \pm (1,1) /\sqrt{2}\) and mass \( 1/2 \) at \( \pm(0,1) \). 
Let
\[
E \coloneqq \bigl\{ \mathbf{x} \in \mathbb{R}^2 \colon {x}_1 >1, {x}_2<1 \bigr\}.
\]
We mention that it is straightforward to show
that for all $\alpha$, \(\lim_{\delta \to 0} L(E_{\delta,+},1,\alpha)=0\). 

\begin{proposition} \label{prop: cool.example2}
Let $\Lambda$, $X$ and $E$ be as above. 

\begin{enumerate}[(i)]
\item 
\begin{itemize}
\item
For \(\alpha < 1\), 
\begin{equation}\label{eq: exactlimit.small.alpha}
\lim_{h\to \infty} h^{2 \alpha} P(X \in h E )=
 \frac{C_\alpha^2 \alpha \Gamma(2\alpha) \Gamma(1-\alpha)}{4\Gamma(1+\alpha)}<\infty.
\end{equation}

\item
For \(\alpha = 1\), 
\begin{equation}\label{eq: critalphaasymp}
\lim_{h\to \infty} \frac{h^2}{\log h} P(X \in h E )= \frac{C_1^2}{4}.
\end{equation}

\item
For \(\alpha > 1\), 
\begin{equation}\label{eq: largealphaasymp}
\lim_{h\to \infty} h^{1+ \alpha} P(X \in h E )= \frac{C_\alpha\alpha\E[|S_1|]}{4}.
\end{equation}

\end{itemize}

\item For all $\alpha \in (0,2)$, $\lim_{\delta \to 0} L(E_{\delta,+},2,\alpha)=\infty$. Moreover,
$L(E^o,2,\alpha)=L(\bar E,2,\alpha)$ is equal to $\infty$ if $\alpha \in [1,2)$ and is
equal to \(\frac{C_\alpha^2 \alpha \Gamma(2\alpha) \Gamma(1-\alpha)}{4\Gamma(1+\alpha)}\)
if $\alpha \in (0,1)$.

\item Let $B_\varepsilon\coloneqq B_\infty((1,1),\varepsilon)$ be
the ball around $(1,1)$ of radius $\varepsilon$ in the $L_\infty$ metric. 
For any \(\varepsilon>0\) and $\alpha\in (0,1)$,
$$
g^+(\alpha,\varepsilon)\coloneqq L(E^o\cup B_\varepsilon,1,\alpha)=
\lim_{\delta \to 0} L((E\cup B_\varepsilon)_{\delta,+},1,\alpha)\in (0,\infty)
$$
implying by Theorem~\ref{theorem: later tails} that  
$$
\lim_{h\to \infty} h^{\alpha} P(X \in h (E\cup B_\varepsilon))=g^+(\alpha,\varepsilon)
$$
and 
$$
g^-(\alpha,\varepsilon)\coloneqq L((E\backslash B_\varepsilon)^o,2,\alpha)=
\lim_{\delta \to 0} L((E\backslash B_\varepsilon)_{\delta,+},2,\alpha)\in (0,\infty)
$$
implying by Theorem~\ref{theorem: later tails} that  
$$
\lim_{h\to \infty} h^{2\alpha} P(X \in h (E\backslash B_\varepsilon))=g^-(\alpha,\varepsilon).
$$

\item For all $\alpha\in (0,2)$,
$g^+(\alpha,\varepsilon)$ is increasing in $\varepsilon$ with
$\lim_{\varepsilon \to 0}g^+(\alpha,\varepsilon)=0$ while
$g^-(\alpha,\varepsilon)$ is decreasing in $\varepsilon$ with
$\lim_{\varepsilon \to 0}g^-(\alpha,\varepsilon)=\infty$ for 
$\alpha\in [1,2)$ and 
$\frac{C_\alpha^2 \alpha \Gamma(2\alpha) \Gamma(1-\alpha)}{4\Gamma(1+\alpha)}$
for $\alpha\in (0,1)$.
\end{enumerate}
\end{proposition}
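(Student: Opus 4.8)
The plan is to prove Proposition~\ref{prop: cool.example2} part by part, with most of the work consisting of reductions to explicit integrals handled via Theorem~\ref{theorem: later tails} and the matrix representation in the Remark following its proof. First I would record the spectral data concretely: writing $\hat{\mathbf{y}}_1 = (1,1)$ and $\hat{\mathbf{y}}_2 = (0,-1)$ (so that $X = \hat{\mathbf{y}}_1 S_1 + \hat{\mathbf{y}}_2 S_2$), the finitely-supported Remark lets me replace every $L(\cdot,k,\alpha)$ by a sum over subsets $J$ of $\{1,2\}$ of explicit integrals of $I(\sum_{i\in J} s_i \hat{\mathbf{y}}_i \in \cdot\,)\prod \alpha s_i^{-(1+\alpha)}\,ds_i$ over $\mathbb{R}^{|J|}$. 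The geometry is simple: $s_1\hat{\mathbf{y}}_1 + s_2\hat{\mathbf{y}}_2 = (s_1, s_1 - s_2)$, so membership in $E = \{x_1>1,\ x_2<1\}$ is the condition $s_1>1$ and $s_1 - s_2 < 1$.

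For part (i), the governing computation is the $k=2$ integral with $J=\{1,2\}$:
\begin{equation*}
\tfrac{C_\alpha^2}{4}\int_{\mathbb{R}^2} I(s_1>1,\ s_1-s_2<1)\,\alpha s_1^{-(1+\alpha)}\alpha s_2^{-(1+\alpha)}\,ds_1\,ds_2,
\end{equation*}
but I must be careful: the defining integral in~\eqref{eq: the limiting expression 0} integrates $s_i$ over $\mathbb{R}_+$ against $\Lambda^k$, and the mass weights $2^{\alpha/2}/2$ and $1/2$ together with the change of variables $s_i\mapsto \|\hat{\mathbf{y}}_i\|_2 s_i$ reproduce exactly the coefficient $C_\alpha^2/4$ on the $\hat{\mathbf{y}}$-scale (this is the content of the Remark). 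The condition $s_1-s_2<1$ with $s_1>1$ forces the near-diagonal region where $s_1\approx s_2$ are both large, so convergence of the $s_2$-integral near $s_2=s_1-1$ is never an issue, but the joint tail as $s_1,s_2\to\infty$ along the diagonal is precisely what distinguishes the three regimes. I would evaluate the inner $s_2$-integral $\int_{(s_1-1)_+}^\infty \alpha s_2^{-(1+\alpha)}\,ds_2 = (s_1-1)^{-\alpha}$ for $s_1>1$, leaving $\int_1^\infty \alpha s_1^{-(1+\alpha)}(s_1-1)^{-\alpha}\,ds_1$. Substituting $s_1 = 1/(1-u)$ (or $s_1 = 1+t$) turns this into a Beta-type integral: it converges iff $\alpha<1$, in which case it evaluates via the Beta function to $\Gamma(2\alpha)\Gamma(1-\alpha)/\Gamma(1+\alpha)$ times $\alpha$, matching~\eqref{eq: exactlimit.small.alpha}; for $\alpha=1$ the integrand behaves like $s_1^{-2}\cdot s_1^{-1}\cdot$(a $\log$-producing tail) and one extracts the $\log h$ normalization directly from Theorem~\ref{theorem: later tails} applied at finite $h$; for $\alpha>1$ the $k=2$ integral diverges, signaling that the true rate is $h^{-(1+\alpha)}$, governed instead by the $k=1$ contribution where one coordinate uses a single spectral point and the other component is an honest stable variable, giving $\E[|S_1|]$ through~\eqref{eq: densitytail}.

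For part (ii), the divergence of $\lim_{\delta\to 0}L(E_{\delta,+},2,\alpha)$ comes from the point $(1,1)\in\partial E$: arbitrarily close to it lies $s_1\hat{\mathbf{y}}_1$ with $s_1=1$ and $s_2=0$, i.e.\ a \emph{one-point} representation, so condition~\eqref{eq: the k point condition} fails for $\bar{E_{\delta,+}}$ and the integral is infinite by Step~2 of the finitely-supported proof; the value of $L(E^o,2,\alpha)=L(\bar E,2,\alpha)$ is then just the convergent/divergent integral already computed in part (i). For parts (iii)--(v) I would exploit that enlarging $E$ to $E\cup B_\varepsilon$ brings the single point $(1,1)$ (a positive multiple of $\hat{\mathbf{y}}_1/\|\hat{\mathbf{y}}_1\|$) into the \emph{open} set, making $E^o\cap S_\Lambda(1)\neq\emptyset$ so that $k=1$ already gives a positive finite limit $g^+$; while deleting $B_\varepsilon$ removes exactly the offending one-point directions, restoring~\eqref{eq: the k point condition} with $k=2$ and yielding a finite $g^-$. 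The monotonicity and limits in (v) follow by dominated/monotone convergence in $\varepsilon$ on these explicit integrals, with the $\varepsilon\to0$ limit of $g^-$ recovering the part-(ii) value. The main obstacle I anticipate is the $\alpha>1$ case of part (i): there the relevant $k$ is no longer matched by the generic $k=2$ count because $E^o$ does contain no single spectral direction but $\bar E$ does, so I must argue directly at finite $h$ that the dominant contribution is a single large $S_1$ with $S_2$ of order one (using that $f$ is decreasing and integrable) rather than relying on the limiting formula, and verify the constant $\alpha\E[|S_1|]/4$ cleanly.
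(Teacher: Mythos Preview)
Your treatment of part (ii) and of parts (iii)--(iv) is essentially correct and matches the paper (which in fact leaves (iii)--(iv) to the reader). The problems are in part (i).

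\textbf{Case $\alpha<1$.} Computing $L(E^o,2,\alpha)$ and recognizing the Beta integral is fine, but this does \emph{not} by itself prove~\eqref{eq: exactlimit.small.alpha}. You yourself show in (ii) that $\lim_{\delta\to 0} L(E_{\delta,+},2,\alpha)=\infty$ for every $\alpha$; hence Theorem~\ref{theorem: later tails} applied to $E$ with $k=2$ yields only the \emph{lower} bound $\liminf_h h^{2\alpha}P(X\in hE)\ge L(E^o,2,\alpha)$, while the upper bound it gives is vacuous. The paper closes this gap precisely via the mechanism you describe in (iii): for each $\varepsilon>0$ one has $L((E\backslash B_\varepsilon)^o,2,\alpha)=\lim_{\delta\to 0}L((E\backslash B_\varepsilon)_{\delta,+},2,\alpha)\in(0,\infty)$, so Theorem~\ref{theorem: later tails} gives an honest limit for $E\backslash B_\varepsilon$, and then one lets $\varepsilon\downarrow 0$ using monotone convergence (and that $E$ is open, so $E\backslash B_\varepsilon\uparrow E$). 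You have all the ingredients but never connect (iii) back to the $\alpha<1$ case of (i).

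\textbf{Case $\alpha=1$.} Here there is a genuine gap. The sentence ``one extracts the $\log h$ normalization directly from Theorem~\ref{theorem: later tails} applied at finite $h$'' has no content: Theorem~\ref{theorem: later tails} is an asymptotic statement with power-law scaling $h^{k\alpha}$, and no choice of $k$ produces a $\log h$ correction. For $k=2$ the theorem gives $\liminf h^2 P(X\in hE)=\infty$ and nothing more. The paper instead abandons Theorem~\ref{theorem: later tails} here and works directly from the identity
\[
\{X\in hE\}=\{h<S_1<h+S_2\},\qquad P(X\in hE)=\int_0^\infty f(t)\,P(h<S_1<h+t)\,dt,
\]
then splits the integral at $t=h$ (upper bound) and at $t=\varepsilon h$ (lower bound), using monotonicity of $f$ together with~\eqref{eq: densitytail} and the elementary fact $\int_0^h t f(t)\,dt \sim (C_1/2)\log h$. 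Your plan contains no substitute for this argument.

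\textbf{Case $\alpha>1$.} Your final paragraph correctly identifies that neither $k=1$ nor $k=2$ in Theorem~\ref{theorem: later tails} captures the rate $h^{-(1+\alpha)}$, and that one must argue directly using the integral representation above with $f$ decreasing and $\int_0^\infty t f(t)\,dt=\E[|S_1|]/2<\infty$. This is exactly the paper's approach.
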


\begin{proof}
We only prove (i) and (ii). (iii) and (iv) are fairly straightforward and left to the reader.
We start with the proof of (ii).

It is easy to see that $L(E^o,2,\alpha)=L(\bar E,2,\alpha)$ and that their common value is
\begin{align*}
& C_\alpha^2  \cdot 2^{\alpha/2}/2 \cdot 1/2 \cdot 
\int_{\sqrt{2}}^\infty
\int_{(s_1/\sqrt{2})-1}^\infty  \alpha^2  s_1^{-(1+\alpha)}  s_2^{-(1+\alpha)}\, ds_2 \, ds_1
\\&\qquad  = 
\frac{C_\alpha^2 }{4}
\int_{1}^\infty
\int_{t_1-1}^\infty  \alpha^2  t_1^{-(1+\alpha)}  s_2^{-(1+\alpha)}\, ds_2 \, dt_1   
\\&\qquad =
\frac{C_\alpha^2 }{4}
\int_1^\infty
 \alpha  t_1^{-(1+\alpha)}  \left[ -s_2^{-\alpha} \right]_{t_1-1}^\infty  \, dt_1
 \\&\qquad =
\frac{C_\alpha^2 }{4}
\int_1^\infty
   \alpha  t_1^{-(1+\alpha)}(t_1-1)^{-\alpha}   \, dt_1
   \\&\qquad = 
\frac{C_\alpha^2 }{4}
\int_0^1
   \alpha  x^{2\alpha-1}(1-x)^{-\alpha}   \, dx.
\end{align*}
 This integral is easily verified to be infinite if and only if 
\( \alpha \geq 1 \) and  strictly positive for all \( \alpha \in (0,1) \).
Recognizing the integrand 
as the probability density function (up to a constant) of a
Beta distribution with parameters $2\alpha$ and $1-\alpha$, we see that the last expression is equal to 
\[
\frac{C_\alpha^2 \alpha \Gamma(2\alpha) \Gamma(1-\alpha)}{4\Gamma(1+\alpha)}.
\] 
The fact that $\lim_{\delta \to 0} L(E_{\delta,+},2,\alpha)$ is $\infty$ is seen by noting that for any fixed $\delta>0$, the term
$$
\Lambda^2 \left( \mathbf{x}_1, \mathbf{x}_2 \in \mathbb{S}^{1} \colon   
s_1\mathbf{x}_1 + s_2\mathbf{x}_2 \in E_{\delta,+} \right)
$$
is uniformly bounded away from 0 for arbitrarily small $s_2$ and hence the integral diverges.
This finishes the proof of (ii).

We now move to (i). 
Since \( X = (1,1) S_1 - (0,1)S_2 \), we have 
\[ 
\{X \in hE \} = \{h < S_1 < h + S_2\}
\]
and so for any $\alpha$, we have
$$
P(X \in hE) =  \int_0^{\infty} f(t) P(h < S_1 < h+t)\, dt .
$$

We now proceed with the \( \alpha \in (0,1) \) case. It is not hard to show that for every $\varepsilon >0$,
$$
L((E\backslash  B_\varepsilon)^o,2,\alpha)=
\lim_{\delta \to 0} L((E\backslash  B_\varepsilon)_{\delta,+},2,\alpha)\in (0,\infty)
$$
and hence by Theorem~\ref{theorem: later tails} 
\[ 
\lim_{h\to \infty}h^{2\alpha}P(X \in h (E \backslash B_\varepsilon)) =
L((E\backslash  B_\varepsilon)^o,2,\alpha).
\]
Letting \(\varepsilon\to 0\), we can apply the monotone convergence theorem to both sides
(using the fact that $E$ is open) and conclude~\eqref{eq: exactlimit.small.alpha} as desired.

Now instead let \( \alpha =1 \). It is not hard to show that for every $\varepsilon >0$,
by breaking up the following integral into $[0,h]$ and $[h,\infty)$ and using the fact that $f$ is 
decreasing, we have
$$
\frac{h^2}{\log h} P(X \in h E )=  
\frac{h^2}{\log h}  \int_0^{\infty} f(t) P(h < S_1 < h+t)\, dt 
$$
$$
\le \frac{h^2}{\log h} \Bigl[   f(h)  \int_0^{h} f(t) t\, dt + P(S_1\ge h)^2\Bigr].
$$
Noting that~\eqref{eq: basictail} implies the second term goes to 0 as $h\to \infty$ and
the fact that that~\eqref{eq: densitytail} easily implies that
\begin{equation}\label{eq: cesaro}
\lim_{h\to \infty} \frac{\int_0^{h} f(t) t\, dt}{\log h}=C_1/2
\end{equation}
as well as applying~\eqref{eq: densitytail} directly, we get
$$
\limsup_{h\to\infty}\frac{h^2}{\log h} P(X \in h E )\le
\frac{C_1^2}{4}.
$$
For the lower bound, we fix $\varepsilon >0$, integrate only over $[0,\varepsilon h]$ and use $f$ is decreasing
to obtain
$$
\frac{h^2}{\log h} P(X \in h E )\ge 
\frac{h^2}{\log h}  f((1+\varepsilon)h)  \int_0^{\varepsilon h} f(t) t\, dt.
$$
Using~\eqref{eq: densitytail} and~\eqref{eq: cesaro}, the limit of the last term is, as $h\to\infty$,
equal to
$C_1^2/4(1+\varepsilon)^2$. Hence for every $\varepsilon >0$, we have
$$
\liminf_{h\to\infty}\frac{h^2}{\log h} P(X \in h E )\ge\frac{C_1^2}{4(1+\varepsilon)^2}
$$
and we can then let $\varepsilon\to 0$ to complete the proof.

Finally, we now do the case \( \alpha \in (1,2) \).
Using the fact that $f$ is decreasing and using~\eqref{eq: densitytail}, we have
\begin{equation}\label{eq: upperbound}
P(X \in hE) =  \int_0^{\infty} f(t) P(h < S_1 < h+t)\, dt 
\le f(h) \int_0^{\infty} f(t)t \, dt 
\end{equation}
$$
\sim \frac{C_\alpha\alpha\E[|S_1|]}{4}h^{-(1+\alpha)}
$$
establishing the upper bound in~\eqref{eq: largealphaasymp}. 
For the lower bound,  fixing $\varepsilon>0$, we have
\begin{equation}\label{eq: lowerbound}
\begin{split}
&P(X \in hE) =  \int_0^{\infty} f(t) P(h < S_1 < h+t)\, dt 
\ge \int_0^{\varepsilon h} f(t) P(h < S_1 < h+t)\, dt 
\\&\qquad \ge
f(h(1+\varepsilon)) \int_0^{\varepsilon h} f(t)t \, dt 
\sim (1+\varepsilon)^{-(1+\alpha)} \, \frac{C_\alpha\alpha\E[|S_1|]}{4}h^{-(1+\alpha)}.
\end{split}
\end{equation}
It follows that 
$$
\liminf_{h\to\infty} h^{(1+\alpha)}P(X \in hE) \ge
(1+\varepsilon)^{-(1+\alpha)} \, \frac{C_\alpha\alpha\E[|S_1|]}{4}.
$$
One can now let $\varepsilon\to 0$, obtaining the lower bound in~\eqref{eq: largealphaasymp}, completing
the proof.
\end{proof}

\begin{remark}
If $X$ is as in our first example where we have independent components, 
one can construct a set, namely
\[
E \coloneqq \{ \mathbf{x} \in \mathbb{R}^2 \colon {x}_1 >1, {x}_2>a(x-1) \}
\]
for $a \in (0,1)$, which exhibits similar behavior to that in the above proposition. However, the above 
example, when generalized to three variables, is what we need in another context and so we proceeded in 
this way.
\end{remark}

\begin{remark}\label{remark: other phase transition point}
With the previous result in mind, one might wonder if any threshold  for events of the type \( \{ X \in hE \} \) will 
occur at \( \alpha = 1 \). To show that this is not the case, fix \( \alpha \in (0,2) \)  and \( \sigma >0 \), and define
\[
E_\sigma \coloneqq \bigl\{ \mathbf{x} \in \mathbb{R}^2 \colon 1 < {x}_1 < 1 + {x}_2^\sigma \bigr\}.
\]
Further,  let \( S_1, S_2 \sim S_\alpha \) be i.i.d\ and consider the decay rate of   \( P(X \in hE_\sigma) \) 
as \( h \to \infty \).
Then, using a very similar argument to the argument in the proof of Proposition~\ref{prop: cool.example2},
one can show that we  get a phase transition in the behavior of the decay rate of \( P(X \in hE_\sigma) \) at 
\( \alpha = \sigma \), and in fact
\[
P\bigl(X \in hE_\sigma  \bigr) \asymp \begin{cases} h^{-(\alpha + \sigma)} &\text{if } \alpha > \sigma \cr h^{-2 \alpha} \log h & \text{if } \alpha = \sigma \cr h^{-2\alpha} &\text{if } \alpha < \sigma.\end{cases}
\]

\begin{figure}[ht]

\centering
\subfigure[\( \sigma = 0.5 \)]{
\centering
\begin{tikzpicture}[scale=0.75]
\begin{axis}[axis x line=middle,axis y line=middle, xmin=-3,xmax=3, ymin=-3,ymax=3, xlabel=$\mathbf{x}_1$, ylabel=$\mathbf{x}_2$, x label style={at={(axis description cs:1.05, 0.54)},anchor=north}, y label style={at={(axis description cs:0.5, 1.08)},anchor=north}, xtick={0}, ytick={0}]
 
\draw[dashed,name path=A] (axis cs: 1,0) -- (axis cs: 1,4.2);
\addplot[dashed, name path=B, no markers, samples=200,domain=1:4] {(x-1)^(1/0.5)};
 
\tikzfillbetween[of=A and B]{fill = gray!30, opacity=.4};
 
\draw[green,domain=0:360] plot (axis cs: {0.5*cos(\x)}, {0.5*sin(\x)});

\fill[red] (axis cs: 0.5,0) circle (0.6mm); 
\fill[red] (axis cs: -0.5,0) circle (0.6mm); 
\fill[red] (axis cs: 0,-0.5) circle (0.6mm); 
\fill[red] (axis cs: 0,0.5) circle (0.6mm); 
\end{axis}
\end{tikzpicture} 
}
\subfigure[\( \sigma = 1.8 \)]{
\centering
\begin{tikzpicture}[scale=0.75]
\begin{axis}[axis x line=middle,axis y line=middle, xmin=-3,xmax=3, ymin=-3,ymax=3, xlabel=$\mathbf{x}_1$, ylabel=$\mathbf{x}_2$, x label style={at={(axis description cs:1.05, 0.54)},anchor=north}, y label style={at={(axis description cs:0.5, 1.08)},anchor=north}, xtick={0}, ytick={0}]
 
\draw[dashed,name path=A] (axis cs: 1,0) -- (axis cs: 1,4.2);
 \addplot[dashed, no markers, samples=200,name path=B] {(x-1)^(1/1.8)};
 
\tikzfillbetween[of=A and B]{fill = gray!30, opacity=.4};
 
\draw[green,domain=0:360] plot (axis cs: {0.5*cos(\x)}, {0.5*sin(\x)});

\fill[red] (axis cs: 0.5,0) circle (0.6mm); 
\fill[red] (axis cs: -0.5,0) circle (0.6mm); 
\fill[red] (axis cs: 0,-0.5) circle (0.6mm); 
\fill[red] (axis cs: 0,0.5) circle (0.6mm); 
\end{axis}
\end{tikzpicture} 
} 

\caption{The figures above shows the set \( 2E_\sigma \) in Remark~\ref{remark: other phase transition point}, for two different values of \( \sigma \), together with the four points (in red) at which the spectral measure \( \Lambda \) from the same remark has support.}\label{fig: last fig}
\end{figure}
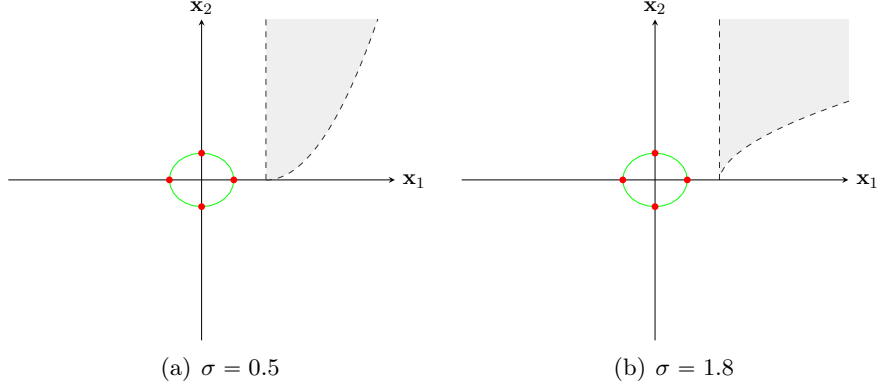

\end{remark}
\end{example}

In our next, and final, example we study one of the simplest three-dimensional permutation invariant multivariate stable distributions, and show that it exhibits the same behavior as our previous example. Here we   only study the case \( \alpha \in (0,1) \) in detail, but the cases \( \alpha = 1 \) and \( \alpha > 1 \) can be done similarly as in the the proof of Proposition~\ref{prop: cool.example2}. 
\begin{example}\label{example: from paper II}
Let \( \alpha \in (0,2) \) and let \( S_0 \), \( S_1 \), \( S_2 \) and \( S_3 \) be i.i.d.\ with \( S_0 \sim S_\alpha \). Furthermore, let \( a \in (0,1) \) and define \( X_1 \), \( X_2 \) and \( X_3 \) by
\[
 X_{i } \coloneqq aS_{0} + (1-a^\alpha)^{1/\alpha} S_i , \qquad i=1,2,3.
\]
Clearly \( (X_1,X_2,X_3) \) is a three-dimensional symmetric \( \alpha \)-stable random vector 
whose marginals are \( S_\alpha \). The corresponding spectral measure  \( \Lambda \)  has mass \( a^\alpha 3^{\alpha/2} /2 \) at \( \pm (1,1,1)/\sqrt{3} \) and mass \( (1-a^\alpha)/2 \) at \( \pm(1,0,0)\), \( \pm (0,1,0 ) \) and \( \pm (0,0,1) \). 
Consider the set 
\[
E \coloneqq \{ \mathbf{x} \in \mathbb{R}^3 \colon {x}_1,\,  {x}_2>1 , \mathbf{x}_3<1\}.
\]

The proof of the  following proposition follows the proof of Proposition~\ref{prop: cool.example2} exactly, and therefore we only give a sketch of the proof here.
\begin{proposition}
Let $\Lambda$, \( X \) and $E$ be as above. 
Then for all \( \alpha \in (0,1)\),  we have that
\begin{equation}\label{eq: exactlimit.small.alpha II}
\lim_{h\to \infty} h^{2 \alpha} P(X \in h E )= 
 \frac{C_\alpha^2}{4}
\Biggl(
 (1-a^\alpha)^2 
+
 a^\alpha(1-a^\alpha)
\cdot \frac{ \alpha \Gamma(2\alpha) \Gamma(1-\alpha)}{\Gamma(1+\alpha)}
   \Biggr)<\infty.
\end{equation}
Moreover, for all $\alpha \in (0,2)$, $\lim_{\delta \to 0} L(E_{\delta,+},2,\alpha)=\infty$, and
$L(E^o,2,\alpha)=L(\bar E,2,\alpha)$ is equal to $\infty$ if $\alpha \in [1,2)$ and is
equal to the right hand side of~\eqref{eq: exactlimit.small.alpha II}
if $\alpha \in (0,1)$.  

\end{proposition}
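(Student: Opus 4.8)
The plan is to dispatch the two statements about $L$ first, since they are explicit integral computations, and only then to read off the asymptotics of $P(X\in hE)$ for $\alpha\in(0,1)$ via Theorem~\ref{theorem: later tails} together with an excision argument at the corner $(1,1,1)$. For the computation of $L(E^o,2,\alpha)$ and $L(\bar E,2,\alpha)$ I would use the finite-support identity from the remark following the finitely supported proof, writing $\hat{\mathbf y}_0=(a,a,a)$ and $\hat{\mathbf y}_i=(1-a^\alpha)^{1/\alpha}\mathbf e_i$ for $i=1,2,3$, and expanding the $\Lambda^2$-integral as $\tfrac{C_\alpha^2}{4}\sum_{\{i,j\}}\int_{\R^2}I\bigl(s_i\hat{\mathbf y}_i+s_j\hat{\mathbf y}_j\in E\bigr)\,\alpha|s_i|^{-(1+\alpha)}\alpha|s_j|^{-(1+\alpha)}\,ds_i\,ds_j$ over the $\binom{4}{2}=6$ pairs. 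Testing each pair against $E=\{x_1>1,\,x_2>1,\,x_3<1\}$, the four pairs $\{1,3\},\{2,3\},\{0,1\},\{0,2\}$ contribute nothing (each forces a coordinate to equal $0$, or forces $as_0>1$ and $as_0<1$ simultaneously), so only $\{1,2\}$ and $\{0,3\}$ survive. The pair $\{1,2\}$ yields $(cs_1,cs_2,0)$ with $c=(1-a^\alpha)^{1/\alpha}$ and contributes $\bigl(\int_{1/c}^\infty\alpha s^{-(1+\alpha)}ds\bigr)^2=c^{2\alpha}=(1-a^\alpha)^2$, while $\{0,3\}$ yields $(as_0,as_0,as_0+cs_3)$ subject to $as_0>1$ and $as_0+cs_3<1$, which after the substitutions $t=as_0$ and then $x=1/t$ collapses exactly to the Beta integral $a^\alpha c^\alpha\int_0^1\alpha x^{2\alpha-1}(1-x)^{-\alpha}\,dx$ already evaluated in the proof of Proposition~\ref{prop: cool.example2}. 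This reproduces the right-hand side of~\eqref{eq: exactlimit.small.alpha II}, which is finite precisely for $\alpha\in(0,1)$ and infinite for $\alpha\in[1,2)$ (the non-integrable factor $(1-x)^{-\alpha}$ at $x=1$); and since all surviving constraints are strict and cut out full-dimensional regions with measure-zero boundaries, $L(E^o,2,\alpha)=L(\bar E,2,\alpha)$.

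For $\lim_{\delta\to0}L(E_{\delta,+},2,\alpha)=\infty$ for every $\alpha\in(0,2)$, I would argue exactly as in Proposition~\ref{prop: cool.example2}(ii): the single support direction $(1,1,1)/\sqrt3$ already meets $E_{\delta,+}$, since the point $(\rho,\rho,\rho)$ lies within $\delta$ of $E$ whenever $\rho\in(1,1+\delta)$ (the nearest point of $E$ being obtained by lowering the third coordinate just below $1$). Hence for such $s_1$ with $s_1/\sqrt3\in(1,1+\delta)$ the factor $\Lambda^2\bigl(\{s_1\mathbf x_1+s_2\mathbf x_2\in E_{\delta,+}\}\bigr)$ stays bounded away from $0$ for all sufficiently small $s_2$, and integrating the surviving $\alpha s_2^{-(1+\alpha)}$ over a neighborhood of $s_2=0$ diverges.

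For the limit of $h^{2\alpha}P(X\in hE)$ when $\alpha\in(0,1)$, I would set $B_\varepsilon\coloneqq B_\infty((1,1,1),\varepsilon)$ and work with $E\setminus B_\varepsilon$. Excising this box removes the $s_3\to0$ tangency at the corner, so that whenever $s_0\hat{\mathbf y}_0+s_j\hat{\mathbf y}_j$ lands in $(E\setminus B_\varepsilon)^o$ the coefficients are bounded away from $0$ by a constant depending on $\varepsilon$; consequently $L((E\setminus B_\varepsilon)^o,2,\alpha)<\infty$, the hypotheses of Remark~\ref{remark: 1}(iii) hold, and Theorem~\ref{theorem: later tails} gives $\lim_h h^{2\alpha}P(X\in h(E\setminus B_\varepsilon))=L((E\setminus B_\varepsilon)^o,2,\alpha)$. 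Since $E$ is open and $(1,1,1)\notin E$, we have $E\setminus B_\varepsilon\uparrow E$, so monotonicity yields the lower bound $\liminf_h h^{2\alpha}P(X\in hE)\ge L((E\setminus B_\varepsilon)^o,2,\alpha)$, and letting $\varepsilon\to0$ with the monotone convergence theorem on the $L$-integral pushes this up to $L(E^o,2,\alpha)$.

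I expect the matching upper bound to be the main obstacle, precisely because here Theorem~\ref{theorem: later tails} supplies only the vacuous estimate $\lim_{\delta\to0}L(E_{\delta,+},2,\alpha)=\infty$, so the excess over $E\setminus B_\varepsilon$ must be controlled by hand. The extra input I would provide is a direct bound on the excised corner: conditioning on $S_0=s_0$ and using independence of the marginals, $P(X\in h(E\cap B_\varepsilon))$ becomes a one-dimensional integral of $f(s_0)$ against products of interval probabilities for $S_\alpha$, localized to $s_0$ near $h/a$; estimating the tails of $S_\alpha$ there shows this probability is $O(\varepsilon^{1-\alpha}h^{-2\alpha})$ (the exponent $1-\alpha>0$ being exactly where $\alpha<1$ is used). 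Then $\limsup_h h^{2\alpha}P(X\in hE)\le L((E\setminus B_\varepsilon)^o,2,\alpha)+C\varepsilon^{1-\alpha}$, and letting $\varepsilon\to0$ forces $\limsup_h h^{2\alpha}P(X\in hE)\le L(E^o,2,\alpha)$, matching the lower bound and establishing~\eqref{eq: exactlimit.small.alpha II}. The remaining assertions for $\alpha\in[1,2)$, where $L(E^o,2,\alpha)=L(\bar E,2,\alpha)=\infty$, are immediate from the first paragraph.
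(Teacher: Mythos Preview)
Your proposal is correct and follows the same overall strategy as the paper's sketch: compute $L(E^o,2,\alpha)=L(\bar E,2,\alpha)$ by enumerating the two contributing pairs $\{1,2\}$ and $\{0,3\}$ (exactly as the paper does), observe that $\lim_{\delta\to 0}L(E_{\delta,+},2,\alpha)=\infty$ because the diagonal $\{(t,t,t)\}$ grazes $\bar E$ at the corner $(1,1,1)$, and then for $\alpha\in(0,1)$ excise a small $L_\infty$-ball $B_\varepsilon$ around $(1,1,1)$, apply Theorem~\ref{theorem: later tails} on $E\setminus B_\varepsilon$, and let $\varepsilon\to 0$.

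The one place you go beyond the paper is the upper bound. The paper, referring back to Proposition~\ref{prop: cool.example2}, simply says ``apply the monotone convergence theorem to both sides'' of the identity $\lim_h h^{2\alpha}P(X\in h(E\setminus B_\varepsilon))=L((E\setminus B_\varepsilon)^o,2,\alpha)$. This immediately delivers $\liminf_h h^{2\alpha}P(X\in hE)\ge L(E^o,2,\alpha)$, but as you correctly flag, monotone convergence alone does not justify interchanging $\lim_{\varepsilon\to 0}$ and $\lim_{h\to\infty}$ to get the matching $\limsup$. Your remedy---a direct estimate $\limsup_h h^{2\alpha}P(X\in h(E\cap B_\varepsilon))=O(\varepsilon^{1-\alpha})$ obtained by conditioning on $S_0$, writing the event as a product of three interval probabilities for independent $S_\alpha$'s, and integrating over $s_0$ near $h/a$---is valid (the main contribution indeed comes from $|h-as_0|\lesssim\varepsilon h$, where the integral of the product of interval probabilities scales like $(\varepsilon h)^{1-\alpha}$, and the remaining ranges of $s_0$ contribute $o(h^{-2\alpha})$). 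This is precisely the ingredient the paper's sketch leaves implicit, so your argument is a faithful and slightly more complete version of the paper's own.
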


\begin{proof}[Proof sketch]
It is easy to see that \( L(E^o,2,\alpha) = L(\bar E,2,\alpha) \) and that their common value is
\begin{align*}
&
\Lambda(e_1) \Lambda(e_2) \cdot C_\alpha^2  
\int_1^\infty
\int_{1}^\infty  \alpha^2  s_1^{-(1+\alpha)}  s_2^{-(1+\alpha)}\, ds_1 \, ds_2
\\&\qquad\qquad +
\Lambda((1,1,1)/\sqrt{3}) \Lambda(-e_3)
\cdot
C_\alpha^2  
\int_{\sqrt{3}}^\infty
\int_{(s_0/\sqrt{3})-1}^\infty  \alpha^2  s_0^{-(1+\alpha)}  s_3^{-(1+\alpha)}\, ds_3 \, ds_0
\\&\qquad  =
\frac{C_\alpha^2}{4}
\left(
 (1-a^\alpha)^2 
+
 a^\alpha(1-a^\alpha)
\int_0^1
   \alpha  x^{2\alpha-1}(1-x)^{-\alpha}   \, dx
   \right).
\end{align*}
The rest of the proof follows the lines of 
the proof of Proposition~\ref{prop: cool.example2} exactly, and is hence omitted here.

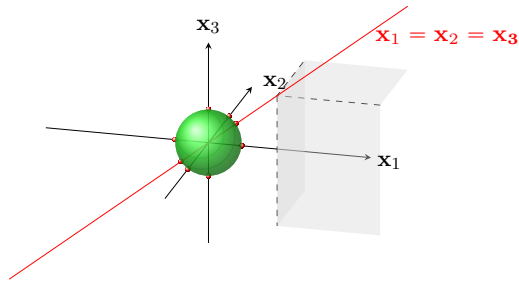
\begin{figure}[ht]

\centering

\begin{tikzpicture}[scale=0.8]

\draw[white] (3.42-6,2) -- (3.42+6,2);

\begin{axis}[axis lines=middle, xmin=-3,xmax=3, ymin=-3,ymax=3, zmin=-3,zmax=3, xtick={0}, ytick={0}, ztick = {0},
xlabel=$\mathbf{x}_1$, 
ylabel=$\mathbf{x}_2$,
zlabel=$\mathbf{x}_3$,
 every axis x label/.style={
  at={(axis cs:\pgfkeysvalueof{/pgfplots/xmax},0,0)},
  xshift=0.8em,yshift=-0.2em                 
  },
  every axis y label/.style={
  at={(axis cs:0,\pgfkeysvalueof{/pgfplots/ymax},0)},
  xshift=1em,yshift=0.2em            
  },
  every axis z label/.style={
  at={(axis cs:0,0,\pgfkeysvalueof{/pgfplots/zmax})},
  xshift=0em,yshift=0.7em           
  },
  view={15}{30} ]

 \draw[dashed] (axis cs: 2.9,1,1) -- (axis cs: 1,1,1) -- (axis cs: 1,2.9,1);
 \draw[dashed]   (axis cs: 1,1,1) -- (axis cs: 1,1,-2.9);
 
 \fill [fill = gray!30, opacity=.4] (axis cs: 2.9,1,1) -- (axis cs: 1,1,1) -- (axis cs: 1,2.9,1) -- (axis cs: 2.9,2.9,1) ;
 \fill [fill = gray!30, opacity=.4] (axis cs: 2.9,1,1) -- (axis cs: 1,1,1) -- (axis cs: 1,1,-2.9) -- (axis cs: 2.9,1,-2.9) ;
 \fill [fill = gray!30, opacity=.4] (axis cs: 1,2.9,1) -- (axis cs: 1,1,1) -- (axis cs: 1,1,-2.9) -- (axis cs: 1,2.9,-2.9) ;
 
 \shade[ball color=red] (axis cs: 0.62,0,0) circle (.4mm);
 \shade[ball color=red] (axis cs: -0.62,0,0) circle (.4mm);
 
  \shade[ball color=red] (axis cs: 0,1.42,0) circle (.4mm);
 \shade[ball color=red] (axis cs: 0,-1.42,0) circle (.4mm);
 
  \shade[ball color=red] (axis cs: 0,0,1) circle (.4mm);
 \shade[ball color=red] (axis cs: 0,0,-1) circle (.4mm);

  \shade[ball color=red] (axis cs: 0.4,0.4,0.4) circle (.4mm);
  \shade[ball color=red] (axis cs: -0.4,-0.4,-0.4) circle (.4mm);
  
\draw[red] (axis cs: -2.9,-2.9,-2.9) -- (axis cs: 2.9,2.9,2.9) ;

 \shade[ball color=green!80!white, opacity=0.5] (axis cs: 0,0,0) circle (.55cm);
 
\end{axis}

\draw[red] (7.4,4.6) node {\footnotesize $\mathbf{x}_1 = \mathbf{x}_2 = \mathbf{x_3}$};

\end{tikzpicture} 

\caption{The figure above shows the set \( 2E \) (gray) considered in Example~\ref{example: from paper II} together with the eight points (in red) at which \( \Lambda \) has support.}
\end{figure}

\end{proof}
\end{example}

\noindent\textbf{Acknowledgements.}
We thank Gennady Samorodnitsky for private correspondence and for elaborating on one 
of the proofs in~\cite{st1994}. We also  thank Thomas Mikosch for informing 
us about the notion of hidden regular variation and its relationship to our work and two anonymous referees for various useful comments. 

The first author acknowledges support from the European Research Council,  grant no.\ 682537. The second author acknowledges the support of the Swedish Research Council, grant no.\ 2016-03835 and the Knut and Alice Wallenberg Foundation, grant no.\ 2012.0067.


\begin{thebibliography}{9}

\bibitem{ag1980}
Araujo, A., Gine, E.:
  \textit{The Central Limit Theorem for Real and Banach Valued Random Variables}, 
  Wiley, NY (1980).

\bibitem{BGT}
Bingham, N. H., Goldie, C. M., Teugels, J. L.:
 \textit{Regular variation}, 
Encyclopedia of Mathematics and its Applications, 27. Cambridge University Press, 
Cambridge (1987). 

\bibitem{bnr1993}
Byczkowski, T.,  Nolan, J. P., Rajput, B.:
\textit{Approximation of multidimensional stable densities}, 
Journal of Multivariate analysis, Vol. 46, pp.\ 13--31 (1993).

\bibitem{dmr2013}
Das, B., Mitra, A., Resnick, S.:
\textit{Living on the multidimensional edge: Seeking hidden risks using regular variation},
Adv. Appl. Prob., Vol. 45, pp. 139--163 (2013).

\bibitem{n1999}
Fofack, H.,  Nolan, J. P.:
\textit{Tail Behavior, Modes and other Characteristics of Stable Distributions},
Extremes, Vol. 2, Issue 1, pp.\ 39–58 (1999).

\bibitem{PS.threshold}
Forsstr\"om, M. P., Steif, J. E.:
\textit{Divide and color representations for threshold Gaussian and stable vectors}, 
in preparation.

\bibitem{gut}
Gut, A.:
\textit{Probability: a graduate course},
Second edition. Springer Texts in Statistics. Springer, New York, (2013).

\bibitem{h2003}
    Hiraba, H.:
   \textit{Asymptotic estimates for densities of multi-dimensional stable distributions},
   Tsukuba J. Math. Vol. 27, No. 2, pp.\ 261-287 (2003).

\bibitem{MW2019}
Mikosch, T., Wintenberger, O.:
 \textit{Extremes for Time Series},
book, in preparation.

\bibitem{sr2002}
Resnick, S.:
\textit{Hidden regular variation, second order regular variation and asymptotic independence},
Extremes Vol. 5, no. 4, pp.\ 303–336 (2002).

\bibitem{sr2007}
Resnick, S.:
\textit{Heavy-tail phenomena. Probabilistic and statistical modeling. 
Springer Series in Operations Research and Financial Engineering}, Springer, New York, 2007.

\bibitem{st1994}
Samorodnitsky, G., Taqqu, M. S.:
\textit{Stable non-Gaussian random processes, Stochastic models with infinite variance}, 
Chapman \& Hall, (1994).

\bibitem{st2017}
Steif, J. E., Tykesson, J.:
\textit{Generalized divide and color models}, 
ALEA. Latin American Journal of Probability and Mathematical Statistics,
Vol. 16, pp.\ 899-955 (2019).


\bibitem{w2007}
Watanabe, T.,
\textit{Asymptotic estimates of multi-dimensional stable densities and their applications}, Transactions of the American Mathematical Society, Volume 359, Number 6, June 2007, pp. 2851--2879 (2007).


\bibitem{z1986}
  Zolotarev, V. M.:
  \textit{One-dimensional Stable Distributions},
  Volume 65 of ``Translations of mathematical monographs'', American Mathematical Society. Translation from 
the 1983 Russian edition.



\end{thebibliography}
\end{document}